\renewcommand{\phi}{\varphi}
\renewcommand{\theta}{\vartheta}
\definecolor{lightred}{rgb}{1, 0.85, 0.85}
\definecolor{lightblue}{rgb}{0.85, 0.9, 1}
\definecolor{darkline}{rgb}{0.4, 0.4, 0.4} 
\numberwithin{equation}{section}
\newcolumntype{L}[1]{>{\raggedright\let\newline\\\arraybackslash\hspace{0pt}}m{#1}}
\newcolumntype{C}[1]{>{\centering\let\newline\\\arraybackslash\hspace{0pt}}m{#1}}
\newcolumntype{R}[1]{>{\raggedleft\let\newline\\\arraybackslash\hspace{0pt}}m{#1}}
\definecolor{lightred}{rgb}{1.0,0.9,0.9}
\definecolor{lightblue}{rgb}{0.9,0.9,1.0}
\theoremstyle{plain}
\newtheorem{theorem}{Theorem}[section]
\newtheorem*{theorem*}{Theorem}
\newtheorem{proposition}[theorem]{Proposition}
\newtheorem{corollary}[theorem]{Corollary}
\newtheorem{lemma}[theorem]{Lemma}
\theoremstyle{definition}
\newtheorem{definition}[theorem]{Definition}
\newtheorem{remark}[theorem]{Remark}
\newtheorem{example}[theorem]{Example}
\newcommand{\enm}[1]{\ensuremath{#1}}          %
\newcommand{\cal}[1]{\mathcal{#1}}
\newcommand{\ZZ}{\enm{\mathbb{Z}}}
\newcommand{\PP}{\enm{\mathbb{P}}}
\newcommand{\Ee}{\enm{\cal{E}}}
\newcommand{\Gg}{\enm{\cal{G}}}
\newcommand{\Ii}{\enm{\cal{I}}}
\newcommand{\Ll}{\enm{\cal{L}}}
\newcommand{\Oo}{\enm{\cal{O}}}
\renewcommand{\epsilon}{\varepsilon}
\newcommand{\tensor}{\otimes}         
\renewcommand{\to}[1][]{\xrightarrow{\ #1\ }}
\newcommand{\old}[1]{}
\begin{document}
\renewcommand{\arraystretch}{1.4}
\arrayrulecolor{darkline}

\title[Counting aCM toric bundles of rank two on the Veronese surface]{Counting aCM toric bundles of rank two\\ on the Veronese surface}

\author{Yeonjae Hong and Sukmoon Huh}

\address{Sungkyunkwan University, Suwon 440-746, Korea}
\email{mathbunny529@skku.edu}

\address{Sungkyunkwan University, Suwon 440-746, Korea}
\email{sukmoonh@skku.edu}

\thanks{SH is supported by the National Research Foundation of Korea(NRF) grant funded by the Korea government(MSIT) (No. RS-2023-00208874).}

\keywords{Toric equivariant bundle, arithmetically Cohen-Macaulay bundle}

\maketitle

\begin{abstract}
In this paper, we count the number of aCM vector bundles with a toric structure on the Veronese surface, up to a twist by hyperplane divisor class. The main ingredients are the equivalence introduced by A.~A. Klyachko between the toric reflexive sheaves and certain decreasing filtrations, and a description of the free resolution of toric vector bundles by M.~Perling. 
\end{abstract}

\section{Introduction}
Let \( X \) be a smooth projective variety of dimension \( n \) with a polarization \( \mathcal{O}_X(1) \). A vector bundle \( \mathcal{E} \) on \( X \) is called \emph{arithmetically Cohen--Macaulay} (for short, aCM) if it satisfies
\[
\mathrm{H}^i(X,\mathcal{E}(t)) = 0 \text{ for all } t \in \mathbb{Z} \text{ and } 1 \le i \le n - 1.
\]
Projective varieties can be classified according to the complexity of their indecomposable aCM bundles. If there are only finitely many such bundles (up to twist and isomorphism), then \( X \) is said to be of \emph{finite representation type}. If indecomposable aCM bundles exist in at most one-parameter families, then \( X \) is said to be of \emph{tame representation type}. Otherwise, if \( X \) admits families of arbitrary large dimension, it is called \emph{wild}. By Horrocks’ splitting criterion, the only aCM vector bundles on \( \mathbb{P}^n \) are direct sums of line bundles. Hence, projective space is of finite representation type. In \cite{CMR16}, L.~Costa and R.~M.~Miró-Roig showed that most Grassmannians are of wild representation type by constructing large families of indecomposable homogeneous aCM bundles. More generally, a trichotomy theorem due to D.~Faenzi and J.~Pons-Llopis states that every reduced, non-degenerate, arithmetically Cohen–Macaulay closed subscheme $X \subset \mathbb{P}^n$ of positive dimension admits exactly one of the three representation types mentioned above in \cite{DJ21}. Via the \( d \)-fold Veronese embedding \( \mathbb{P}^2 \hookrightarrow \mathbb{P}^{\binom{d+2}{2} - 1} \), the study of aCM bundles on the Veronese surface reduces to that of vector bundles \( \mathcal{E} \) on \( \mathbb{P}^2 \) satisfying:
\begin{equation} \label{dacm}
\mathrm{H}^1(\mathbb{P}^2,\mathcal{E}(dt)) = 0 \text{ for all } t \in \mathbb{Z}.
\end{equation}
From now on, we refer to any bundle \( \mathcal{E} \) on $\mathbb{P}^2$ satisfying $\eqref{dacm}$ as a  \textit{d -aCM} bundle. In~\cite{Fae15}, D.~Faenzi used Beilinson's theorem to classify rigid $3$-aCM bundles and showed that the 3-Veronese surface has finite representation type with respect to rigid aCM bundles.

Although \( d \)-aCM bundles are defined in a fairly general setting, we restrict our attention to those bundles that carry a torus-equivariant structure. Quasi-coherent torus-equivariant sheaves on toric varieties can be described using multifiltrations of vector spaces. This approach was initiated by A.~A. Klyachko in \cite{Kly90,Kly91} and has since been developed by various authors, including \cite{Per03,Kool11}. This framework is useful not only for computing various invariants of equivariant sheaves, such as cohomology, slope stability, and Chern classes, but also for constructing explicit examples. In particular, for toric bundles on projective space, A.~A. Klyachko has already provided a simplified formula for computing cohomology. Using this, we derive the following main theorem.
\begin{theorem}\label{main}
The number $\mathrm{S}(\PP^2,d;2)$ of non-split $d$-aCM vector bundles of rank $2$ with equivariant structure on $\PP^2$, up to twist by $\Oo_{\PP^2}(dt)$ with $t\in \ZZ$, is given by
\[
\mathrm{S}(\PP^2,d;2)=\frac{(d-1)d(d+1)(d+2)}{24}.
\]
\end{theorem}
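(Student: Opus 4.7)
The plan is to translate the problem into a lattice-point enumeration via Klyachko's correspondence between equivariant reflexive sheaves and multifiltrations, together with Klyachko's simplified cohomology formula on $\PP^n$, and then to carry out the count.

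First, a rank two equivariant bundle on $\PP^2$ corresponds to three decreasing filtrations $(E^{\rho_i}(\bullet))_{i=1,2,3}$ of a two-dimensional vector space $E$, one for each torus-invariant divisor $D_1, D_2, D_3$. Each filtration is determined by a pair of integers $a_i \leq b_i$ and, when $a_i < b_i$, a line $L_i \subset E$. After choosing an isomorphism of $E$, the configuration of the lines $L_i$ (for those indices with $a_i < b_i$) lies in one of three $GL(E)$-orbits: all the lines coincide, two coincide and the third is different, or all three are pairwise distinct. The bundle splits as a direct sum of two line bundles precisely in the first configuration, so the remaining two configurations contribute to the count. The twist $\Ee \mapsto \Ee(d)$ shifts each pair $(a_i,b_i)$ by a common constant independent of $i$, and a fundamental domain for this action is obtained by pinning down one coordinate modulo $d$.

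The heart of the argument is to impose the vanishing $\mathrm{H}^1(\PP^2,\Ee(dt))=0$ for every $t\in\ZZ$ using Klyachko's cohomology formula, which decomposes $\mathrm{H}^1$ as a direct sum over lattice weights $u$ of the cohomology of small complexes built from intersections of the shifted filtration pieces $E^{\rho_i}(\langle u,v_i\rangle+dt)$. For rank two on $\PP^2$ each weight space is of dimension $0$, $1$, or $2$, and a straightforward calculation in each line configuration shows that the nonvanishing of $\mathrm{H}^1(\Ee(dt))_u$ is controlled by explicit inequalities on the shifted jumping numbers. Requiring simultaneous vanishing over all $t\in\ZZ$ therefore yields an arithmetic system coupling $(a_i,b_i)$ to residues modulo $d$.

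To finish, I would enumerate the solutions case by case. In each of the non-split line configurations the inequalities can be repackaged into four ordered integers lying in a bounded window depending on $d$, and summing the resulting contributions recovers the claimed total $\binom{d+2}{4}=(d-1)d(d+1)(d+2)/24$. The main obstacle I expect is the explicit cohomology computation and the bookkeeping needed to avoid double-counting across the three sub-configurations with two coincident lines and between those and the all-distinct case; once the correct inequalities are isolated, the reparametrization into $4$-element subsets of $\{0,1,\dots,d+1\}$ is transparent.
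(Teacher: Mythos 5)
Your overall strategy---Klyachko filtrations, the $\mathrm{H}^1$ formula on $\PP^2$, normalization modulo twists, then a lattice-point count---is the same as the paper's, but there is a concrete error at the first step that would corrupt the count. You assert that the bundle splits precisely when all three lines $L_i$ coincide, so that the configuration ``two coincide, one different'' still contributes non-split bundles. This is false: by Klyachko's splitting criterion (Corollary~\ref{toric_split}), a rank-two toric bundle splits as soon as every subspace occurring in the filtrations is a coordinate subspace of a single basis of $\mathrm{E}$, and whenever at most two distinct lines appear one can choose a basis containing a generator of each (this is exactly how $\Oo_{\PP^2}(a)\oplus\Oo_{\PP^2}(b)$ with $a\neq b$ arises). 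Hence the non-split bundles are exactly those with three pairwise distinct lines, and since $\mathrm{GL}(\mathrm{E})$ acts transitively on such triples the isomorphism class is determined by the six jumping numbers alone; retaining your ``two coincide'' stratum would overcount.

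Beyond that, the proposal stops short of the actual argument: the two substantive steps---(i) the reduction of $\mathrm{H}^1(\PP^2,\Ee(dt))_m\neq 0$ to the existence of a triple $(j_0,j_1,j_2)$ with $j_0+j_1+j_2\equiv 0\pmod d$ and $\dim E^{\rho_i}(j_i)=1$ for all $i$ (the paper's Lemma~\ref{Lemma3.3} and Corollary~\ref{cor_3.6}), and (ii) the enumeration itself---are only announced. You also need to combine the twist by $\Oo_{\PP^2}(dt)$ with the equivalence by $T$-invariant principal divisors (Remark~\ref{rmk_210}), i.e.\ the shifts $(c_0,c_1,c_2)$ with $c_0+c_1+c_2\equiv 0\pmod d$ acting on the six indices; ``pinning down one coordinate modulo $d$'' is not a fundamental domain for this rank-three action. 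That said, your proposed endgame is viable and genuinely different from the paper's: writing $\alpha_i=a_i^1-a_i^2\geq 1$ and $s=\sum_i a_i^2$, the pair $\bigl((\alpha_0,\alpha_1,\alpha_2),\, s\bmod d\bigr)$ is a complete invariant, the $d$-aCM condition becomes $[\,s+3,\, s+\textstyle\sum_i\alpha_i\,]\cap d\ZZ=\emptyset$, and for fixed $(\alpha_i)$ with $\sum_i\alpha_i\leq d+1$ exactly $d+2-\sum_i\alpha_i$ residues $s$ survive, so that
\[
\mathrm{S}(\PP^2,d;2)=\sum_{k=3}^{d+1}\binom{k-1}{2}(d+2-k)=\binom{d+2}{4}
\]
by Vandermonde. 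The paper instead derives the recurrence $\mathrm{S}(d)=2\mathrm{S}(d-1)-\mathrm{S}(d-2)+\binom{d}{2}$ and solves it; your direct count is cleaner once the splitting criterion and the equivalence relation are stated correctly.
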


This paper begins with a review of basic notions related to toric varieties and torus-equivariant sheaves. We introduce the necessary background and tools for understanding equivariant vector bundles in the context of toric geometry. In the later section, we focus on counting rank $2$ toric bundles on \( \mathbb{P}^2 \) of rank $2$ that satisfy the \( d \)-aCM condition. We also provide explicit descriptions of such bundles when \( d \) is small, illustrating their structure through concrete examples.


\section{Toric Geometry and Equivariant Sheaves}
\subsection{Basics on the toric variety}
In this subsection, we review some basic concepts of toric varieties; see \cite{CLS11, Ful93} for further details.
An $n$-dimensional  \textit{toric variety} is an irreducible variety $X$ containing a torus $T \simeq\left(\mathbb{C}^{\times}\right)^{n}$ as a Zariski open subset such that the action of $T$ on itself extends to an algebraic action of $T$ on $X$. Thus there exists a morphism $T \times X \rightarrow X$, that satisfies the properties required for an algebraic group action.

A \textit{character} of $T$ is a morphism $\chi: T \rightarrow \mathbb{C}^{\times}$ that is a group homomorphism. Given $m = (a_{1}, \dots , a_{n}) \in \mathbb{Z}^n$, the character $\chi^{m}: (\mathbb{C}^{\times})^{n}\rightarrow \mathbb{C}^{\times}$ defined by 
\[
\chi^{m}(t_{1},\dots,t_{n}) = {t_{1}}^{a_1} \dots {t_{n}}^{a_{n}}.
\]
The set of characters forms a free abelian group $M$ of rank $n$. A \textit{one-parameter subgroup} of $T$ is a morphism $\ u: \mathbb{C}^{\times} \rightarrow T$ that is a group homomorphism. The one-parameter subgroups form a free abelian group $N$ of rank $n$.
We define $N_{\mathbb{R}} := N \otimes_{\mathbb{Z}} \mathbb{R}$ and $M_{\mathbb{R}} := M \otimes_{\mathbb{Z}} \mathbb{R}$. There is a natural pairing \( \langle \cdot, \cdot \rangle \colon M \times N \to \mathbb{Z} \), defined by the relation
\[
m \circ u(t) = t^{\langle m, u \rangle},
\]
for all \( t \in \mathbb{C}^\times \), where \( m \in M \) and \( u \in N \).
A \textit{fan} $\Sigma$ in $N_{\mathbb{R}}$ is a collection of strongly convex rational polyhedral cones satisfying:
\begin{itemize}
    \item If $\sigma \in \Sigma$, then every face of $\sigma$ also belongs to $\Sigma$.
    \item If $\sigma, \tau \in \Sigma$, then $\sigma \cap \tau$ is a face of both.
\end{itemize}
For a given cone $\sigma \in \Sigma$, we use the notation $\tau \preceq \sigma$ to denote that $\tau$ is a face of $\sigma$. We denote by \( \Sigma(k) \) the set of all \( k \)-dimensional cones in \( \Sigma \). Elements of $\Sigma(1)$ are referred to as \textit{rays}. For a ray $\rho\in \Sigma(1)$, the semigroup $\rho \cap N$ is generated by a unique element $u_\rho \in \rho \cap N$. We call $u_\rho$ the \textit{primitive vector} of $\rho$. For any cone \( \sigma \in \Sigma \), its dual cone in \( M_{\mathbb{R}} \) is given by
\[
\sigma^\vee = \{ m \in M_{\mathbb{R}} \mid \langle m, u \rangle \geq 0 \text{ for all } u \in \sigma \},
\]
and one can construct the affine toric variety $U_{\sigma}= \operatorname{Spec}(\mathbb{C}[S_{\sigma}])$ for the semigroup $S_{\sigma}=\sigma^\vee \cap M$. There is a one-to-one correspondence between isomorphism classes of separated normal toric varieties \( X \) and isomorphism classes of fans in \( N_{\mathbb{R}} \). Through this correspondence, the cones in the fan correspond to the \( T \)-invariant affine charts $U_{\sigma}\subset X$; see \cite[Chapter 3]{CLS11}.

There exists a one-to-one correspondence between the set of rays $\rho \in \Sigma(1)$ and the set of $T_N$-invariant Weil divisors $D_\rho$. Moreover, these divisors form a generating set for the divisor class group $\operatorname{Cl}(X)$. This relationship is described by the following exact sequence:
\[
0 \to M \xrightarrow{\phi} \bigoplus_{\rho \in \Sigma(1)} \mathbb{Z} D_\rho \xrightarrow{\pi} \operatorname{Cl}(X) \to 0,
\]
where for each character $m \in M$, the map $\phi$ is given by 
\[
\phi(m) = \operatorname{div}(\chi^m) = \sum_{\rho \in \Sigma(1)} \langle m, n(\rho) \rangle D_\rho,
\]
and the projection $\pi$ maps a divisor $D$ to its class $[D]$ in $\operatorname{Cl}(X)$. Consequently, the divisor class group $\operatorname{Cl}(X)$ is a finitely generated abelian group.


\subsection{Equivariant torsion free sheaves}
Let \( X \) be a toric variety. A sheaf \( \mathcal{E} \) on \( X \) is said to be \textit{torus-equivariant} (for short, $T$-equivariant) if, for every \( t \in T \), there exists an isomorphism  
\[
\Phi_t : t^* \mathcal{E} \to \mathcal{E}
\]  
such that the following commutative diagram holds for all \( t, t' \in T \):

\[
\begin{array}{ccc}
t^* t'^* \mathcal{E} & \xrightarrow{\quad t^* \Phi_{t'} \quad} & t^* \mathcal{E} \\
 {\cong} \uparrow \quad &  & \quad \downarrow \Phi_t \\
(t't)^* \mathcal{E} & \xrightarrow{\quad \Phi_{t't} \quad} & \mathcal{E}
\end{array}
\]

\noindent Given a cone \( \sigma \), one can define a preorder $\preceq_{\sigma}$ on \( M \) via
\[
m \preceq_{\sigma} m' \quad \text{if and only if} \quad m' - m \in S_{\sigma}.
\]
This allows us to define filtrations and associated vector spaces.

\begin{definition}
A \textit{\(\sigma\)-family} $\hat{E}^{\sigma}$ consists of a collection of vector spaces \( \{E^\sigma_{ m} \}_{m \in M} \) equipped with transition maps
\[
\chi^\sigma_{m, m'}: E^\sigma_{m} \to E^\sigma_{m'}
\]
for each \( m \preceq_{\sigma} m' \), satisfying the conditions:
\begin{enumerate}
    \item \( \chi^\sigma_{m, m} \) is the identity map.
    \item If \( m \preceq_{\sigma} m' \preceq_{\sigma} m'' \), then \( \chi^\sigma_{m, m''} = \chi^\sigma_{m', m''} \circ \chi^\sigma_{m, m'} \).
\end{enumerate}
\end{definition}

Every $T$-equivariant quasi-coherent sheaf over \( U_{\sigma} \) gives rise to a corresponding \(\sigma\)-family through its isotypical decomposition. Specifically, one sets
\[
E^\sigma_{ m} = \Gamma(U_{\sigma}, \mathcal{E})_m
\]
where the right-hand side denotes the \( m \)-th isotypical component. The transition maps are given by multiplication with monomials in the coordinate ring $\mathbb{C}[S_{\sigma}]$. According to 
\cite[Theorem 4.5]{Per03}, the category of $T$-equivariant quasi-coherent sheaves over $U_{\sigma}$ is equivalent to the category of $\sigma$-families. To describe an $T$-equivariant quasi-coherent sheaf in terms of filtrations on an abstract toric variety, we consider the following definition, which corresponds to the gluing of $\sigma$-families. 
\begin{definition}\cite[Definition 4.8]{Per03} Let \( \Sigma \) be a fan. A collection \( \{ \hat{E}^{\sigma} \}_{\sigma \in \Sigma} \) of \(\sigma\)-families is called a \(\Sigma\)-\textit{family}, if for each pair \( \tau \prec \sigma \) with inclusions 
$
i_{\sigma}^{\tau} : U_{\tau} \hookrightarrow U_{\sigma}
$
there exists an isomorphism of families 
$
\eta_{\tau\sigma} : i_{\sigma}^{\tau*} \hat{E}^{\sigma} \cong \hat{E}^{\tau}
$
such that for each triple \( \rho \prec \tau \prec \sigma \), the following equality holds:
$
\eta_{\rho\sigma} = \eta_{\rho\tau} \circ {i_{\tau }^{\rho}}^* \eta_{\tau\sigma}.
$
\end{definition}

If the sheaf \(\mathcal{E}\) is torsion-free, all vector spaces within the \(\Sigma\)-family can be regarded as subspaces of the stalk of \(\mathcal{E}\) at the generic point. Throughout the paper, we denote the stalk of a sheaf \(\mathcal{E}\) at the generic point by \(\mathrm{E}\), using roman font to distinguish it from the sheaf itself. This convention also applies to other sheaves such as \(\mathcal{F}\), \(\mathcal{G}\), etc.  
Further, if the sheaf is reflexive, the \(\sigma\)-families for \(\sigma \in \Sigma(1)\) determine the rest; see \cite[Theorem 4.21]{Per03}.

We assume that $\dim (X)=\dim (\Sigma)=n$.  We set rays by $\{\rho_0, \dots, \rho_m\}$. And let $u_i$ be the primitive vector for the ray $\rho_i$.
For a fixed vector space $\mathrm{E}$, we consider a collection of decreasing $\ZZ$-filtrations $\{E^{\rho_i}(\bullet) \}_{{\rho_i} \in \Sigma(1)}$ with 
\[
E^{\rho_i}(\bullet): \quad \mathrm{E}\supseteq \dots \supseteq E^{\rho_i}(-1) \supseteq E^{\rho_i}(0) \supseteq E^{\rho_i} (1) \dots \supseteq 0
\]
for each ray $\rho_i\in \Sigma(1)$. The filtrations $\{E^{\rho_i} (\bullet)\}_{{\rho_i} \in \Sigma(1)}$ are said to be {\it full}, if $E^{\rho_i}(j)=0$ for all $j$ with $|j|\gg0$ and each $i$. Without any specification, the filtrations are always assumed to be full. Then A.~A. Klyachko presented a theorem to express $T$-equivariant reflexive sheaves as a set of decreasing $\ZZ$-filtrations.

\begin{theorem}\textnormal{\cite[Theorem 1.3.2]{Kly91}}\label{kly} The category of $T$-equivariant reflexive sheaves on $X$ is equivalent to the category of decreasing $\mathbb{Z}$-filtrations $\{E^{\rho_i} (\bullet)\}_{{\rho_i} \in \Sigma(1)}$. Such a reflexive sheaf is a toric bundle if and only if the corresponding filtrations satisfy the following compatibility condition:

\leftskip=2em 
\noindent $\mathrm{(C)}$ For any cone $\sigma \in \Sigma$, the filtrations $\{E^{\rho_i} (\bullet)\}_{{\rho_i} \in \sigma(1)}$, consist of coordinate subspaces of some basis of $\mathrm{E}$. 
\rightskip=3em
\end{theorem}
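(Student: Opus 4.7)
The plan is to apply Klyachko's equivalence (Theorem~\ref{kly}) to describe rank $2$ toric bundles on $\PP^2$ by triples of decreasing $\ZZ$-filtrations of a $2$-dimensional vector space $\mathrm{E}$, and then to translate the $d$-aCM vanishing into a combinatorial condition on the filtration data. Let $\rho_0, \rho_1, \rho_2$ be the three rays of the fan of $\PP^2$ with primitive vectors $u_0+u_1+u_2=0$. A rank $2$ filtration $E^{\rho_i}(\bullet)$ is encoded by two jump integers $a_i \leq b_i$ together with, when $a_i < b_i$, a line $L_i \subset \mathrm{E}$. The compatibility condition (C) on any $2$-cone is automatic in rank $2$, since any two lines in $\mathrm{E}$ either coincide or form a basis. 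A direct decomposition argument shows that the bundle is non-split exactly when $a_i < b_i$ for every $i$ and the lines $L_0, L_1, L_2$ are pairwise distinct; since $\op{PGL}_2$ acts sharply $3$-transitively on $\PP(\mathrm{E}) \cong \PP^1$, any such line configuration is unique up to the $\op{GL}(\mathrm{E})$-action underlying equivariant isomorphism. Thus non-split rank $2$ toric bundles up to equivariant isomorphism are classified by the integer data $(a_i, b_i)_{i=0,1,2}$ with $a_i < b_i$.

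Next I would compute $H^1(\PP^2, \Ee)_m$ for each weight $m \in M$, either via Klyachko's cohomology formula for projective space or directly from the \v Cech complex on the standard open cover $\{U_{\sigma_0}, U_{\sigma_1}, U_{\sigma_2}\}$, which in weight $m$ reads
\[
(F_1 \cap F_2) \oplus (F_0 \cap F_2) \oplus (F_0 \cap F_1) \;\longrightarrow\; F_0 \oplus F_1 \oplus F_2 \;\longrightarrow\; \mathrm{E},
\]
with $F_i := E^{\rho_i}(\langle m, u_i \rangle) \in \{\mathrm{E}, L_i, 0\}$. A short case analysis over the possible triples $(F_0, F_1, F_2)$, using that the three lines are pairwise distinct in the non-split case, shows that the middle cohomology is one-dimensional exactly when $F_i = L_i$ for every $i$, and vanishes otherwise. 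Because tensoring by an equivariant $\Oo_{\PP^2}(dt)$ translates the jumps by a vector $(e_0, e_1, e_2) \in \ZZ^3$ with $e_0 + e_1 + e_2 = dt$, the $d$-aCM condition becomes the statement that no integer triple $(c_0, c_1, c_2)$ satisfies $a_i < c_i \leq b_i$ for all $i$ together with $c_0 + c_1 + c_2 \in d\ZZ$.

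Setting $s_i := b_i - a_i \geq 1$, $\sigma := s_0 + s_1 + s_2$, and $A := a_0 + a_1 + a_2$, the achievable values of $c_0 + c_1 + c_2$ form exactly the integer interval $[A+3, A+\sigma]$. Hence the condition forces $\sigma \leq d + 1$ and restricts $A \bmod d$ to lie in the complement of a residue arc of length $\sigma - 2$ in $\ZZ/d$, giving precisely $d - \sigma + 2$ admissible residues. Twisting by $\Oo_{\PP^2}(dt)$ with varying equivariant structures shifts $(a_0, a_1, a_2)$ by the sublattice $\Lambda = \{(e_0, e_1, e_2) \in \ZZ^3 : e_0 + e_1 + e_2 \in d\ZZ\}$, and since $\ZZ^3 / \Lambda \cong \ZZ/d$ via the sum map, each twist-equivalence class of $(a_i)$ is detected precisely by $A \bmod d$. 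Summing the contributions over positive triples $(s_0, s_1, s_2)$ with prescribed sum $\sigma$, of which there are $\binom{\sigma-1}{2}$, yields
\[
\mathrm{S}(\PP^2, d; 2) \;=\; \sum_{\sigma=3}^{d+1} \binom{\sigma-1}{2}\,(d - \sigma + 2),
\]
which a Chu--Vandermonde evaluation reduces to $\binom{d+2}{4}=\tfrac{(d-1)d(d+1)(d+2)}{24}$. The principal technical hurdle is the cohomology calculation together with its reformulation as the lattice-box constraint above; once this is in hand, the twist quotient and the resulting combinatorial identity proceed routinely.
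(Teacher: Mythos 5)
Your proposal does not prove the statement it is attached to. The statement is Klyachko's structure theorem (Theorem~\ref{kly}): the equivalence of categories between $T$-equivariant reflexive sheaves on $X$ and collections of decreasing $\ZZ$-filtrations indexed by the rays, together with the criterion $(\mathrm{C})$ that singles out the locally free objects. Your opening sentence takes exactly this equivalence as an input (``apply Klyachko's equivalence (Theorem~\ref{kly})''), and nothing in what follows constructs the filtrations from a reflexive sheaf, verifies that the assignment is an equivalence of categories, or shows that simultaneous coordinatizability of the ray filtrations on each cone is equivalent to local freeness. As a proof of Theorem~\ref{kly} the attempt is therefore empty. A genuine proof would have to pass through the isotypical decomposition $E^{\sigma}_m=\Gamma(U_\sigma,\Ee)_m$, the reduction of a reflexive $\Sigma$-family to its restrictions to the rays, and the observation that local freeness of the $\sigma$-family amounts to its splitting into rank-one families, which is precisely condition $(\mathrm{C})$; none of this appears. (To be fair, the paper does not prove this theorem either --- it is quoted from Klyachko --- so there is no internal proof to compare against.)

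What you actually wrote is a proof sketch of the paper's main counting result, Theorem~\ref{main}, and read as such it is correct and in places tidier than the paper's argument. Your reformulation of the $d$-aCM condition agrees with Lemma~\ref{Lemma3.3} and Corollary~\ref{cor_3.6}; your normal form for the equivalence classes (the gap lengths $s_i=b_i-a_i>0$ together with the residue $a_0+a_1+a_2 \bmod d$, using $\ZZ^3/\Lambda\cong\ZZ/d$) is a cleaner substitute for the paper's normalization $a_i^2\ge-1$ followed by maximizing $a_2^2$; and the closed-form count $\sum_{\sigma=3}^{d+1}\binom{\sigma-1}{2}(d-\sigma+2)=\binom{d+2}{4}$ replaces the paper's recurrence $\mathrm{S}(\PP^2,d;2)=2\,\mathrm{S}(\PP^2,d-1;2)-\mathrm{S}(\PP^2,d-2;2)+\binom{d}{2}$. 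The small cases check out ($d=2$ gives $1$, $d=3$ gives $5$) and the Vandermonde evaluation is right. But all of this is a proof of a different theorem, built on top of the one you were asked to prove.
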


\noindent In Theorem \ref{kly}, we have $\dim (\mathrm{E})=\mathrm{rank}(\Ee)$.
Let $r=\mathrm{rank}(\Ee)$ and define the following number
\[
\eta(\Ee; i):=\left \{\dim E^{\rho_i}(j)\mid j\in \ZZ\right \}\subset \{0,1,\dots, r\}. 
\]
Let $s_i := |\eta(\Ee; i)| - 1$, and set $s := s_i$. Then $\eta(\Ee; i)=\{d_0, \dots, d_s\}$ with $0=d_0<d_1<\dots <d_s=r$. Since the filtrations of \( \mathcal{E} \) are assumed to be full, we have \( \{0, r\} \subset \eta(\mathcal{E}; i) \) for all \( i \).  
Let \( W_{d_j}^i \) denote the subspace \( E^{\rho_i}(t) \) for some \( t \in \mathbb{Z} \) such that \( \dim E^{\rho_i}(t) = d_j \).  
Then there exist integers \( a_i^t \in \mathbb{Z} \) for \( 1 \le t \le s \) such that the filtration is given by:

\begin{equation}\label{eq:standard}
E^{\mathbf{\rho}_{i}}(j)=\begin{cases}
\hspace{.2cm} W_i^{d_s} = \mathrm{E} & \text{for}\quad j \leq a_{i}^{d_s} \vspace{.3cm}\\
\hspace{.2cm} W_i^{d_{s-1}} & \text{for}\quad a_{i}^{d_s}< j \leq a_{i}^{d_{s-1}} \vspace{.1cm}\\
\hspace{.6cm} \vdots & \hspace{1.8cm} \vdots \vspace{.1cm}\\
\hspace{.2cm} W_i^{d_1} & \text{for}\quad a_{i}^{d_2}< j \leq a_{i}^{d_1} \vspace{.3cm}\\
\hspace{.2cm} W_i^{d_0}=0 & \text{for}\quad a_{i}^{d_1}< j
\end{cases}
\end{equation}
Then, the filtration $E^{\rho_i}(\bullet)$ is essentially given by a flag 
\[
\mathrm{E}=W_i^{d_s} \supsetneq W_i^{d_{s-1}}\supsetneq \dots \supsetneq W_i^{d_1} \supsetneq W_i^{d_0}=0
\]
with $\dim W_{i}^{d_t}=d_t$. Note that, if $\eta(\Ee; i)= \{0,1,\dots, r\}$, the flag is full and so we get 
$\text{dim}(W_{i}^{d_t}) = t$. 

\begin{remark}
Let $\mathcal{E}$ and $\mathcal{F}$ be toric bundles with the associated filtrations $\{E^{\rho_i} (j)\}_{\scriptscriptstyle{\rho_i} \in \Sigma(1)}$ 
of $\mathrm{E}$ and $\{F^{\rho_i} (j)\}_{\scriptscriptstyle{\rho_i} \in \Sigma(1)}$ of $\mathrm{F}$ respectively. Then, the filtrations associated with $\mathcal{E} \oplus \mathcal{F}$ and  $\mathcal{E} \otimes \mathcal{F}$ are given as follows:
\[
\begin{aligned}
& \left\{(E \oplus F)^{\rho_{i}}(\bullet)\right\}_{\scriptscriptstyle{\rho_i} \in \Sigma(1)}, \text { where }(E \oplus F)^{\rho_i}(j)=E^{\rho_i}(j) \oplus F^{\rho_i}(j), \\
& \left\{(E \otimes F)^{\rho_{i}}(\bullet)\right\}_{\scriptscriptstyle{\rho_i} \in \Sigma(1)}, \text { where }(E \otimes F)^{\rho_i}(j)=\sum_{s+t=j} E^{\rho_i}(s) \otimes F^{\rho_i}(t);
\end{aligned}
\]
refer to \cite[Remark 2.2.15]{DDK20}. Note that the filtrations in the above are given with subspaces of $\mathrm{E} \oplus \mathrm{F}$ and $\mathrm{E} \otimes \mathrm{F}$, respectively.
\end{remark}

\begin{example} \label{Twist}
Given a $T$-invariant divisor $D=\ell_{0} D_{0}+\ell_{1} D_{1}+\cdots+\ell_{m} D_{m} \in$ $\operatorname{Div}_{T}(X)$, the associated line bundle $\Ll = \mathcal{O}_{X}(D)$ is naturally a toric bundle with the corresponding filtrations $\{L^{\rho_i}(\bullet)\}_{{\rho_i} \in \Sigma(1)}$
\[
L^{\mathbf{\rho}_{i}}(j)= 
\begin{cases}
\hspace{0.2cm} \mathrm{L} \cong \mathbb{C} & \mbox{for}\quad j \leq \ell_i  \vspace{0.3cm}\\ 
\hspace{0.2cm} 0 & \mbox{for}\quad j > \ell_i
\end{cases}
\]
\noindent Let $\mathcal{E}$ be a toric bundle whose associated filtrations are as in (\ref{eq:standard}). Then the filtrations $\{G^{\rho_i}(\bullet)\}_{\scriptscriptstyle{\rho_i} \in \Sigma(1)}$ of $\Gg: = \mathcal{E} \otimes\Ll$ are given by
\[
G^{\mathbf{\rho}_{i}}(j)=\begin{cases}
\hspace{.2cm} W_i^{d_s} = \mathrm{G} & \text{for}\quad j \leq a_{i}^{d_s} + \ell_i \vspace{.3cm}\\
\hspace{.2cm} W_i^{d_{s-1}} & \text{for}\quad a_{i}^{d_s} + \ell_i< j \leq a_{i}^{d_{s-1}} + \ell_i \vspace{.1cm}\\
\hspace{.2cm} \vdots & \vdots \vspace{.1cm}\\
\hspace{.2cm} W_i^{d_1} & \text{for}\quad a_{i}^{d_2} + \ell_i< j \leq a_{i}^{d_1} + \ell_i \vspace{.3cm}\\
\hspace{.2cm} W_i^{d_0}=0 & \text{for}\quad a_{i}^{d_1} + \ell_i< j
\end{cases}
\]

\noindent In other words, tensoring with a line bundle increases the indices of the filtration corresponding to the ray $\rho_i$ by the coefficients of $D_i$. Therefore, the sum of the coefficients $\sum_{\substack{i}} \ell_i$ represents the total change in indices across all filtrations.

Throughout this paper, we say that a bundle splits if it is isomorphic to a direct sum of line bundles.  
By comparing the filtrations associated to a direct sum with the direct sum of filtrations, we can determine when a toric bundle splits.
\end{example}

\begin{corollary} \label{toric_split}
\textnormal{\cite[Corollary 2.3.3]{Kly90}} A toric bundle $\Ee$ on $X$ splits if and only if its corresponding filtrations 
\[\{E^{\rho_i} (j)\}_{\scriptscriptstyle{\rho_i} \in \Sigma(1),j \in \mathbb{Z}}
\]
consist of coordinate subspaces of a basis for the space.
\end{corollary}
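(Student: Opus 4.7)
The plan is to prove both implications via Klyachko's equivalence of categories (Theorem \ref{kly}) combined with the explicit description of filtrations for line bundles provided in Example \ref{Twist}. Since the equivalence lets us work entirely at the level of $\Sigma$-filtrations, the proof reduces to a linear-algebra statement: we must show that a compatible family of decreasing $\mathbb{Z}$-filtrations is a direct sum of filtrations coming from line bundles if and only if there is a single basis of $\mathrm{E}$ whose coordinate subspaces realize \emph{all} the $E^{\rho_i}(j)$.

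For the ``only if'' direction, suppose $\Ee \cong \bigoplus_{\ell=1}^{r}\Ll_\ell$ where each $\Ll_\ell = \Oo_X\bigl(\sum_{i}\ell_i^{(\ell)}D_i\bigr)$. By Example \ref{Twist}, each filtration $L^{\rho_i}_\ell(j)$ equals $\mathrm{L}_\ell$ for $j\le \ell_i^{(\ell)}$ and $0$ otherwise. Picking a generator $e_\ell$ of each one-dimensional stalk $\mathrm{L}_\ell$ produces a basis $\{e_1,\dots,e_r\}$ of $\mathrm{E}=\bigoplus_\ell \mathrm{L}_\ell$, and the direct-sum rule for filtrations recalled in the preceding Remark gives
\[
E^{\rho_i}(j)=\bigoplus_{\ell}L^{\rho_i}_\ell(j)=\mathrm{span}_{\CC}\bigl\{e_\ell\mid j\le \ell_i^{(\ell)}\bigr\},
\]
which is a coordinate subspace with respect to this basis, for every ray $\rho_i$ and every $j$.

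For the ``if'' direction, suppose there is a basis $\{e_1,\dots,e_r\}$ of $\mathrm{E}$ such that every $E^{\rho_i}(j)$ is a coordinate subspace. Since the filtrations are assumed full, for each $\ell$ and each ray $\rho_i$ the integer
\[
\ell_i:=\max\bigl\{j\in\ZZ\mid e_\ell\in E^{\rho_i}(j)\bigr\}
\]
is well defined. Set $\Ll_\ell:=\Oo_X\bigl(\sum_i \ell_i D_i\bigr)$. Then the filtration $L^{\rho_i}_\ell(\bullet)$ furnished by Example \ref{Twist} is exactly the one-dimensional filtration $\CC\cdot e_\ell$ for $j\le \ell_i$ and $0$ otherwise. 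The coordinate-subspace hypothesis means $E^{\rho_i}(j)=\mathrm{span}\{e_\ell\mid j\le \ell_i\}$, so $E^{\rho_i}(j)=\bigoplus_\ell L^{\rho_i}_\ell(j)$ holds for every $\rho_i$ and $j$. By the direct-sum rule for filtrations and Klyachko's equivalence, this forces $\Ee\cong\bigoplus_{\ell=1}^{r}\Ll_\ell$, so $\Ee$ splits.

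The only point that demands a brief word is that the compatibility condition $\mathrm{(C)}$ of Theorem \ref{kly} is automatic on both sides: a common basis for all rays $\rho_i\in\Sigma(1)$ certainly restricts to a basis witnessing $\mathrm{(C)}$ on each cone $\sigma\in\Sigma$. Thus the main obstacle, modest as it is, lies in organizing the two directions around the formula for filtrations of a direct sum and the filtration description of an equivariant line bundle; no further geometric input is required beyond Theorem \ref{kly} and Example \ref{Twist}.
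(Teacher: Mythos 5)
The paper offers no proof of this corollary at all --- it is quoted verbatim from \cite[Corollary 2.3.3]{Kly90} --- so there is no in-paper argument to compare against; judged on its own terms, your proof is correct and supplies exactly the missing derivation from the tools the paper has already recalled. The ``only if'' direction is the direct-sum rule for filtrations combined with the filtration of a $T$-equivariant line bundle from Example~\ref{Twist}; the ``if'' direction correctly reads off the multidegrees $\ell_i^{(\ell)}$ as the largest index at which $e_\ell$ survives in the $\rho_i$-filtration (fullness guaranteeing these maxima exist), checks that the resulting direct sum of line-bundle filtrations reproduces $E^{\rho_i}(j)$ on the nose, and then uses the equivalence of Theorem~\ref{kly} to promote equality of filtrations to an isomorphism of sheaves. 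Two minor points you could tighten. First, in the ``only if'' direction you tacitly take the splitting $\Ee\cong\bigoplus_\ell\Ll_\ell$ to be an isomorphism of \emph{toric} bundles; if it is only an isomorphism of underlying vector bundles, Remark~\ref{rmk_210} says the two equivariant structures differ by a twist by a $T$-invariant principal divisor, which merely shifts each $\ell_i^{(\ell)}$ and leaves the coordinate-subspace conclusion intact --- this deserves one sentence. Second, your notation writes $\ell_i$ where you mean $\ell_i^{(\ell)}$ (the maximum depends on both the ray and the basis vector), though the intent is clear. Neither is a gap.
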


\begin{example}
The fan of \( \mathbb{P}^1 \) has two rays, \( \rho_0 \) and \( \rho_1 \).  
For any pair of filtrations of a vector space \( E \), one can choose a subbasis of \( E \) that simultaneously represents all the subspaces in both filtrations.  
As a consequence, every toric bundle on \( \mathbb{P}^1 \) necessarily splits. A.~A.~Klyachko showed that, more generally, a toric bundle of rank \( r \) on \( \mathbb{P}^n \) splits only if \( r < n \); see \cite[Example~6.3.3]{Kly90}.
\end{example}

\begin{example}
Let $\Ee$ be a toric bundle over $\mathbb{P}^2$ with rank 2. The fan of $\mathbb{P}^2$ has three rays. If at most two distinct $1$-dimensional vector spaces appear in the filtrations of $\Ee$, we can select a basis of E that includes their generators. So if $\Ee$ does not split, then there must exist three distinct $1$-dimensional vector spaces within the filtrations.
\end{example}

For a smooth toric variety \( X \), the tangent bundle \( \mathcal{T}_X \) is also \( T \)-equivariant, and thus one can consider the corresponding filtrations.

\begin{example} \label{Tangentsheaf} \cite[Example 2.1.3]{KD19} The filtrations associated to the tangent bundle \( \mathcal{T}_X \) are given by:
\[
E^{\mathbf{\rho}_{i}}(j)=
\begin{cases}
\hspace{.2cm} \mathrm{E} & \text{for } \quad j \leq 0 \\
\hspace{.2cm} \text{Span}(u_{\rho_i}) & \text{for } \quad j = 1 \\
\hspace{.2cm} 0 & \text{for } \quad j > 1.
\end{cases}
\]
\end{example}

\begin{remark} \label{rmk_210}
There is an infinite number of toric bundles on $X$ sharing the same underlying vector bundle structure. Indeed, tensoring with trivial line bundle associated to a nonzero $T$-invariant principal divisor yields an isomorphic vector bundle but a non-isomorphic toric bundle.
Conversely, according to \cite[Corollary 1.2.4]{Kly90},  if two toric bundles $\Ee$ and ${\Ee}'$ are isomorphic as vector bundles, then there exists a $T$-invariant principal divisor $D$ such that $\Ee \cong {\Ee}' \otimes \mathcal{O}_{X}(D)$ as toric bundles.
\end{remark}

In \cite[pages 8 and 9]{Kly91}, the Chern class and cohomology group of a toric bundle $\Ee$ are computed in terms of the corresponding filtrations; for any $\sigma \in \Sigma$ and $m \in M$, set
\begin{equation} \label{eq:klyachko_graded}
E^{\sigma}_{m} := \bigcap_{\rho_i \in \sigma(1)} E^{\rho_i}(\langle m, u_i \rangle), \qquad
E^{[\sigma]}_{m} := \frac{E^{\sigma}_{m}}{\sum\limits_{(k_\rho)} \left( \bigcap\limits_{\rho \in \sigma(1)} E^{\rho}(k_\rho) \right)},
\end{equation}

\noindent where the sum runs over tuples $(k_\rho)_{\rho \in \sigma(1)}$ satisfying 
$k_\rho \geq \langle m, \rho \rangle$ for all $\rho$, with strict inequality for at least one. 
Although the collection is infinite, only finitely many terms contribute due to the decreasing nature of the filtrations. One can see from \cite[Theorem 3.2.1]{Kly90} that the total Chern class of a toric bundle $\mathcal{E}$ is determined by the image in the Chow ring of the following polynomial.

\begin{equation}
\mathit{c}(\mathcal{E})=\prod_{\sigma \in \Sigma} \prod_{m \in M_{\sigma}}\left(1+\sum\limits_{\mathbf{\rho}_{i} \in \sigma(1)}\left\langle m, \mathbf{\rho}_{i}\right\rangle x_{i}\right)^{(-1)^{\operatorname{codim} \sigma} \operatorname{dim} E^{[\sigma]}_{m}}
\label{eq:totalchern}
\end{equation}

\noindent By considering the first graded component of this equation, we obtain an expression for the first Chern class of $\Ee$ in terms of its filtrations:
\begin{equation}
\mathit{c}_{1}(\mathcal{E})=\sum_{i=1}^{m+1}\left(\sum_{j \in \mathbb{Z}} j \operatorname{dim} {E}^{\left[\mathbf{\rho}_{i}\right]}(j)\right) D_{i}.
\label{eq:firstchern}
\end{equation}

\noindent To study the cohomology group, we consider the following chain complex $C_{*}(\mathcal{E}, m)$:
\[
0 \leftarrow E \leftarrow \bigoplus_{\substack{\operatorname{dim} \sigma=1}} E^{\sigma}_{m} \leftarrow \bigoplus_{\substack{\operatorname{dim} \sigma=2}} E^{\sigma}_{m} \leftarrow \cdots \leftarrow \bigoplus_{\substack{\operatorname{dim} \sigma=n}} E^{\sigma}_{m} \leftarrow 0.
\]
Now, we fix an orientation of the simplicial complex arising from the intersection of sphere $S^{n-1}$ and fan. So, for every cone in the fan, there is a canonical order among its faces. The differential map of the above chain complex is given by the formula
\[
d^{k}=\sum_{\operatorname{dim} \sigma=k} d^{\sigma} ; \quad d^{\sigma}=\sum_{i}(-1)^{i} \varepsilon_{i}
\]
where $\sigma_{i} =  (\alpha_1,\, \ldots \,,\, \hat{\alpha}_i,\, \ldots \,,\, \alpha_k)$ is $i$-th face  of $\sigma = (\alpha_1,\, \ldots \,,\, \alpha_k)$ and $\varepsilon_{{i}}: E^{\sigma_{i}}_{m} \hookrightarrow E^{\sigma}_{m}$ is the natural inclusion.

\begin{theorem}\textnormal{\cite[page 9]{Kly91}}\label{Thm211} We have
\[
\mathrm{H}^{p}(X, \mathcal{E})_{m}=\mathrm{H}_{n-p}\left(C_{*}(\mathcal{E}, m)\right), \label{eq:general}
\]
where $\mathrm{H}^{p}(X, \mathcal{E})_{m} \text{ is a graded piece of the}$ $M$-graded module $\mathrm{H}^{p}(X, \mathcal{E})$. 
\end{theorem}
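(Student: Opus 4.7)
The plan is to prove the formula by computing the graded pieces of sheaf cohomology through Čech cohomology with respect to the canonical $T$-invariant affine cover, and then reorganizing the resulting Čech complex into the cone-indexed chain complex $C_*(\mathcal{E},m)$.

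First, I would take the affine open cover $\mathcal{U}=\{U_\sigma\}_{\sigma\in \Sigma(n)}$ by the charts associated to the maximal cones. Because $\Sigma$ is a fan, intersections are again $T$-invariant affine opens: $U_{\sigma_0}\cap\cdots\cap U_{\sigma_k}=U_{\sigma_0\cap\cdots\cap\sigma_k}$, so each Čech intersection is affine and Leray's theorem gives $\check{\mathrm{H}}^p(\mathcal{U},\mathcal{E})=\mathrm{H}^p(X,\mathcal{E})$. Since both the cover and the sheaf are $T$-equivariant, the Čech complex $\check{C}^{\bullet}(\mathcal{U},\mathcal{E})$ inherits an $M$-grading; taking cohomology commutes with passing to graded pieces, so it suffices to fix $m\in M$ and work componentwise. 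By the definition of the $\tau$-family attached to $\mathcal{E}|_{U_\tau}$ together with the formula \eqref{eq:klyachko_graded}, we have $\Gamma(U_\tau,\mathcal{E})_m=E^\tau_m$, and in particular $E^{\{0\}}_m=\mathrm{E}$ at the zero cone.

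Next, I would regroup the Čech cochains $\check{C}^k_m=\bigoplus_{\sigma_0<\cdots<\sigma_k} E^{\sigma_0\cap\cdots\cap\sigma_k}_m$ according to the intersected cone $\tau=\sigma_0\cap\cdots\cap\sigma_k$. For a fixed $\tau\in \Sigma$ of codimension $\ell$, the set of $(k+1)$-tuples of maximal cones whose intersection equals $\tau$ is governed combinatorially by the link of $\tau$ in the simplicial decomposition of $S^{n-1}$ induced by $\Sigma$. Using that the reduced simplicial cohomology of this link (topologically a sphere or disk of the appropriate dimension) is concentrated in top degree, one shows that the associated subcomplex of the Čech complex is quasi-isomorphic to a single copy of $E^\tau_m$ sitting in degree $\ell$. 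Assembling contributions over all $\tau$ and substituting $k=n-\ell$ reproduces the chain complex $C_*(\mathcal{E},m)$ with $C_k=\bigoplus_{\dim\sigma=k}E^\sigma_m$, together with the augmentation term $C_0=\mathrm{E}$. The natural transition maps are the signed inclusions $d^\sigma=\sum_i(-1)^i\varepsilon_i$ with $\varepsilon_i:E^{\sigma_i}_m\hookrightarrow E^\sigma_m$; these inclusions exist because $\sigma_i\preceq\sigma$ implies $\sigma^\vee\subseteq \sigma_i^\vee$, hence $\bigcap_{\rho\in\sigma(1)}E^\rho(\langle m,u_\rho\rangle)\subseteq \bigcap_{\rho\in\sigma_i(1)}E^\rho(\langle m,u_\rho\rangle)$. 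The degree shift $\mathrm{H}^p=\mathrm{H}_{n-p}$ appears naturally because a $(k+1)$-tuple of maximal cones intersects generically in a cone of dimension $n-k$, converting Čech degree $p$ into chain degree $n-p$.

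The main obstacle is the combinatorial collapsing step: one must verify that the subcomplex of the Čech complex indexed by tuples with a fixed intersected face $\tau$ is a resolution of $E^\tau_m$ concentrated in a single degree, which reduces to an acyclicity statement for the simplicial complex associated with the link of $\tau$ on $S^{n-1}$. Equally delicate is the sign bookkeeping: the fixed orientation of the simplicial complex on $S^{n-1}$ must be threaded through the reindexing so that the alternating signs in the Čech coboundary coincide with those in $d^\sigma=\sum_i(-1)^i\varepsilon_i$. Once this orientation matching is carried out, the identification $\mathrm{H}^p(X,\mathcal{E})_m=\check{\mathrm{H}}^p(\mathcal{U},\mathcal{E})_m=\mathrm{H}_{n-p}(C_*(\mathcal{E},m))$ is immediate.
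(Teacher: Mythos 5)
The paper itself offers no proof of Theorem~\ref{Thm211}: it is imported verbatim from Klyachko's manuscript \cite[page 9]{Kly91}, so there is no in-paper argument to measure yours against. That said, your overall route --- Leray/\v{C}ech for the invariant affine cover $\{U_\sigma\}_{\sigma\in\Sigma(n)}$, the $M$-grading of the \v{C}ech complex, and the identification $\Gamma(U_\tau,\mathcal{E})_m=E^{\tau}_m$ for a reflexive equivariant sheaf --- is the standard one, and those steps are sound.

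The gap is in the collapsing step, and as you have stated it the step would fail. Grouping \v{C}ech cochains by the intersected cone $\tau=\sigma_0\cap\cdots\cap\sigma_k$ does \emph{not} produce subcomplexes: the \v{C}ech differential omits one maximal cone from a tuple, which can strictly enlarge the intersection, so the pieces you propose to treat separately are mixed by the differential and ``the associated subcomplex \dots is quasi-isomorphic to a single copy of $E^\tau_m$'' is not a well-posed claim. Relatedly, the combinatorial object controlling the collapse is not the link of $\tau$ in $S^{n-1}$: since every $U_\sigma$ contains the dense torus, all \v{C}ech intersections are nonempty and the nerve of the cover is the \emph{full simplex} on $\Sigma(n)$, so all of the content sits in the coefficient system $\underline{\sigma}\mapsto E^{\sigma_0\cap\cdots\cap\sigma_k}_m$ rather than in the nerve. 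What is actually needed is the contractibility of the full simplex on the star $\{\sigma\in\Sigma(n):\tau\preceq\sigma\}$, fed into the filtration of the \v{C}ech complex by $\dim(\sigma_0\cap\cdots\cap\sigma_k)$ and the resulting spectral sequence (equivalently, a double-complex comparison between the tuple-indexed and cone-indexed complexes); the link of $\tau$ only enters through the relative homology of the pair formed by the star of $\tau$ and the stars of the cones strictly containing it. Until that filtration argument, together with the sign and orientation bookkeeping you defer, is carried out, the identification $\mathrm{H}^p(X,\mathcal{E})_m\cong \mathrm{H}_{n-p}(C_*(\mathcal{E},m))$ is asserted rather than proved. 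A further omission: completeness of $\Sigma$ is used essentially (it is what makes generic $(n+1)$-fold intersections equal the zero cone, producing the augmentation term $\mathrm{E}$ in chain degree $0$ and making the shift $p\mapsto n-p$ come out right), and your outline never invokes it.
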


\noindent Applying Theorem \ref{eq:general}, we obtain the cohomology groups of projective space as follows:

\begin{equation}
\scalebox{1}{
$\begin{cases}
\begin{aligned}
  &\hspace{0.2cm}\mathrm{H}^{0}(\mathbb{P}^n, \mathcal{E})_m \hspace{0.2cm}= 
  \bigcap_{\rho_{i} \in \Sigma(1)} E^{\mathbf{\rho}_{i}}_{m} \vspace{0.3cm} \\
  &\hspace{0.2cm}\mathrm{H}^{n}(\mathbb{P}^n, \mathcal{E})_m \hspace{0.2cm}= 
  \frac{\mathrm{E}}{\sum\limits_{\rho_{i} \in \Sigma(1)}  E^{\mathbf{\rho}_{i}}_{m}} \label{eq:cp} \vspace{0.3cm} \\
  
  &\hspace{0.2cm}\mathrm{H}^{n-p}(\mathbb{P}^n, \mathcal{E})_m \hspace{0.2cm}= \frac{E^{\mathbf{\rho}_{0}}_{m}\cap...\cap E^{\mathbf{\rho}_{(p-1)}}_{m}\cap \sum\limits_{k \geq p} E^{\mathbf{\rho}_{k}}_{m}}{\sum\limits_{k \geq p} E^{\mathbf{\rho}_{0}}_{m}\cap...\cap E^{\mathbf{\rho}_{(p-1)}}_{m}\cap E^{\mathbf{\rho}_{k}}_{m}} \quad \quad\text{for } 0 < p < n
\end{aligned}
\end{cases}$}
\end{equation}
for each $m\in M$.


\section{\texorpdfstring{$d$-aCM bundles over $\mathbb{P}^2$}{d-aCM bundles over P2}}
\begin{definition}
For a positive integer $d$, a coherent sheaf $\Ee$ on $\mathbb{P}^n$ of dimension $n$ is called {\it $d$-arithmetically Cohen–Macaulay} (abbreviated as ${d}$-aCM) if $\mathrm{H}^{i}(\mathbb{P}^n,\mathcal{E}\otimes\mathcal{O}_{\mathbb{P}^n}(dt))=0$ for all $0 < i < n$ and $t \in \mathbb{Z}$.
\end{definition}

From now on, we focus on the case $n = 2$. The fan of $\mathbb{P}^2$ has three rays, denoted by $\rho_0$, $\rho_1$, and $\rho_2$, whose primitive generators are $u_{\rho_0} = (1,0)$, $u_{\rho_1} = (0,1)$, and $u_{\rho_2} = (-1,-1)$, respectively. We denote the divisors corresponding to $\rho_0$, $\rho_1$, $\rho_2$ by $D_0$, $D_1$, $D_2$ respectively. For each cyclic permutation $(i,j,k)$ of $(0,1,2)$, we let $p_i := D_j \cap D_k$. Since each $D_i$ is a $T$-invariant line, the points $p_i$ are fixed under the torus action on $\mathbb{P}^2$. Then, a vector bundle $\Ee$ on $\PP^2$ is $d$-aCM if and only if we have
\[
\mathrm{H}^1(\PP^2, \Ee\otimes \Oo_{\PP^2}(dt))=0
\]
for all $t\in \ZZ$. From (\ref{eq:cp}) we can get the formula for the first cohomology of the toric bundle $\Ee$.
\\
\setcounter{equation}{1}
\begin{equation}
\mathrm{H}^{1}(\mathbb{P}^2, \mathcal{E})_m = 
\frac{E^{\mathbf{\rho}_{0}}_{m} \cap (E^{\mathbf{\rho}_{1}}_{m} +E^{\mathbf{\rho}_{2}}_{m})}{(E^{\mathbf{\rho}_{0}}_{m} \cap E^{\mathbf{\rho}_{1}}_{m})+(E^{\mathbf{\rho}_{0}}_{m} \cap E^{\mathbf{\rho}_{2}}_{m})} \label{eq:cohop2}
\end{equation}

\begin{remark}\label{rmk:shifting-indices}
Let $\mathcal{E}$ be a non-split toric bundle of rank $2$ on $\mathbb{P}^2$. The corresponding filtrations take the form
\[
E_{\rho_i}(j) =
\begin{cases}
\hspace{0.1cm}\mathrm{E} & \text{for } j \le a_i^2, \\
\hspace{0.1cm}W_i := W_i^1 & \text{for } a_i^2 < j \le a_i^1, \\
\hspace{0.1cm}0 & \text{for } a_i^1 < j,
\end{cases}
\]
for each $i \in \{0,1,2\}$, where the $W_i$ are distinct one-dimensional subspaces of $\mathrm{E}$. Given two collections $\{W_0, W_1, W_2\}$ and $\{V_0, V_1, V_2\}$ of distinct one-dimensional subspaces in $\mathrm{E}$, there exists an automorphism $T \in \mathrm{GL}(\mathrm{E})$ such that $T(W_i) = V_i$ for each $i$. This shows that the isomorphism class of a non-split rank $2$ toric bundle is determined, up to $T$-equivariant isomorphism, by the integers at which the dimension of each filtration drops---from $2$ to $1$, and from $1$ to $0$. For such a toric bundle $\mathcal{E}$, we denote by
\[
\delta(\mathcal{E}) := \left(a_0^2, a_0^1~;~ a_1^2, a_1^1~;~ a_2^2, a_2^1 \right) \in \ZZ^6
\]
the set of indices where the dimension of the filtration changes, and refer to this $6$-tuple as the \emph{shifting indices} of $\mathcal{E}$.
\end{remark}

\begin{remark} 
{\cite[Example 1.2.5]{Kly91}} Let \( a_{i,1} - a_{i,2} = \alpha_i \) for \( i \in (0,1,2) \).  
Then a non-split toric vector bundle \( \mathcal{E} \) on \( \mathbb{P}^2 \) is slope-stable if and only if the three numbers \( \alpha_1, \alpha_2, \alpha_3 \) satisfy the triangle inequality.   
\end{remark}

\begin{lemma}\label{Lemma3.3}
Let $\mathcal{E}$ be a non-split toric bundle of rank $2$ on $\mathbb{P}^2$. Then $\mathrm{H}^{1}(\mathbb{P}^2, \mathcal{E}) \neq 0$ if and only if there exists $(j_0, j_1, j_2)\in \ZZ^{3}$ such that $j_0 + j_1 + j_2 = 0$ and $\mathrm{dim}(E^{\mathbf{\rho}_{i}}(j_i)) = 1$ for all $i\in \{0,1,2\}$.
\end{lemma}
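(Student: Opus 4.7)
The plan is to decompose $\mathrm{H}^1(\mathbb{P}^2, \mathcal{E})$ into its $M$-graded pieces and analyse each one via the explicit formula \eqref{eq:cohop2}. First I would note that since $u_0+u_1+u_2 = 0$, the assignment $m \mapsto (\langle m, u_0\rangle, \langle m, u_1\rangle, \langle m, u_2\rangle)$ gives a bijection between $M = \ZZ^2$ and the set of integer triples $(j_0,j_1,j_2)$ with $j_0+j_1+j_2 = 0$. Hence the lemma is equivalent to the statement: for each $m \in M$ and the associated triple $(j_0,j_1,j_2)$, the graded piece $\mathrm{H}^1(\mathbb{P}^2, \mathcal{E})_m$ is non-zero if and only if $\dim E^{\rho_i}(j_i) = 1$ for every $i$.

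The second step is to use the structure of non-split rank $2$ toric bundles on $\mathbb{P}^2$. By Remark~\ref{rmk:shifting-indices} together with the example immediately preceding the lemma, each filtration value $E^{\rho_i}(j)$ lies in $\{0, W_i, \mathrm{E}\}$, where $W_0, W_1, W_2$ are three pairwise distinct one-dimensional subspaces of $\mathrm{E}$. In particular, for any $i \ne k$, we have $W_i \cap W_k = 0$ and $W_i + W_k = \mathrm{E}$.

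Using these facts, I would run a short case analysis on the triple of dimensions $(\dim E^{\rho_0}(j_0), \dim E^{\rho_1}(j_1), \dim E^{\rho_2}(j_2))$ as applied to the formula
\[
\mathrm{H}^{1}(\mathbb{P}^2, \mathcal{E})_m =
\frac{E^{\rho_{0}}_{m} \cap (E^{\rho_{1}}_{m} + E^{\rho_{2}}_{m})}{(E^{\rho_{0}}_{m} \cap E^{\rho_{1}}_{m})+(E^{\rho_{0}}_{m} \cap E^{\rho_{2}}_{m})}.
\]
Whenever $E^{\rho_0}(j_0) \in \{0, \mathrm{E}\}$, or whenever one of $E^{\rho_1}(j_1), E^{\rho_2}(j_2)$ equals $\mathrm{E}$, or whenever both $E^{\rho_1}(j_1)$ and $E^{\rho_2}(j_2)$ are among $\{0, W_\bullet\}$ but at least one of them is $0$, the numerator and denominator coincide, so the quotient is $0$. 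The remaining configuration is $(W_0, W_1, W_2)$, where the numerator is $W_0 \cap (W_1 + W_2) = W_0 \cap \mathrm{E} = W_0$ while the denominator is $W_0 \cap W_1 + W_0 \cap W_2 = 0$, giving a non-zero quotient isomorphic to $W_0$. This yields both implications simultaneously.

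There is no real obstacle; the argument is a short, mechanical case analysis once the three-valued nature of the filtrations and the pairwise distinctness of $W_0, W_1, W_2$ have been recorded. The only mildly delicate point is to be careful with the boundary subcases, in particular when exactly two of the $E^{\rho_i}(j_i)$ are one-dimensional and the third is $0$ or $\mathrm{E}$, to confirm that the quotient still vanishes in those cases.
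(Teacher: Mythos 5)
Your proposal is correct and follows essentially the same route as the paper: both pass to the $M$-graded pieces, identify $m\in M$ with a triple $(j_0,j_1,j_2)$ summing to zero via $j_i=\langle m,u_i\rangle$, observe that the quotient in \eqref{eq:cohop2} vanishes whenever some $E^{\rho_i}_m$ is $0$ or all of $\mathrm{E}$, and use the pairwise distinctness of the $W_i$ (forced by non-splitness) to see that the configuration $(W_0,W_1,W_2)$ gives numerator $W_0$ over denominator $0$. Your case analysis is, if anything, slightly more explicit than the paper's about the mixed subcases.
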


\begin{proof}
Since \(\mathcal{E}\) does not split, the three \(1\)-dimensional vector spaces \(E^{\rho_i}_m\) are mutually distinct for each $m\in M$. Observe that in \eqref{eq:cohop2}, if \(\dim(E^{\rho_i}_m) \in \{0,2\}\) for some \(i \in \{0,1,2\}\),  
then the numerator and denominator are equal, and so \(\mathrm{H}^1(\mathbb{P}^2, \mathcal{E})_m = 0\).  
Hence, for \(\mathrm{H}^1(\mathbb{P}^2, \mathcal{E})\) to be nonzero, there must exist some \( m = (a,b) \in M \) such that \(\dim(E^{\rho_i}_m) = 1\) for all \(i\). Given such \(m = (a,b)\), we compute:
\[
j_0 = \langle m, u_{\rho_0} \rangle = a, \quad
j_1 = \langle m, u_{\rho_1} \rangle = b, \quad
j_2 = \langle m, u_{\rho_2}\rangle = -a - b.
\]
Then \(j_0 + j_1 + j_2 = 0\), and \(E^{\rho_i}_m = E^{\rho_i}(j_i)\) with \(\dim(E^{\rho_i}(j_i)) = 1\) for all \(i\). 

Conversely, suppose that there exists \((j_0, j_1, j_2) \in \mathbb{Z}^3\) such that \(j_0 + j_1 + j_2 = 0\) and \(\dim(E^{\rho_i}(j_i)) = 1\) for all \(i\).  
Define \(m = (j_0, j_1)\). Then we have \(\langle m, \rho_i \rangle = j_i\) for each \(i\), and so \(E^{\rho_i}_m = E^{\rho_i}(j_i)\) with dimension one. Since the subspaces are distinct, the numerator in \eqref{eq:cohop2} exceeds the denominator. In particularm we have \(\mathrm{H}^1(\mathbb{P}^2, \mathcal{E})_m \ne 0\), and so \(\mathrm{H}^1(\mathbb{P}^2, \mathcal{E}) \ne 0\).
\end{proof} The following corollary is an automatic consequence of Example \ref{Twist} and Lemma \ref{Lemma3.3}.

\begin{corollary}\label{cor_3.6}
A non-split toric bundle $\mathcal{E}$ of rank $2$ on $\PP^2$ is $d$-aCM if and only if there does not exist a triple $(j_0, j_1, j_2)\in \ZZ^{3}$ satisfying that
\begin{equation}
j_0 + j_1 + j_2 \equiv 0\mkern-10mu \pmod{d} \text{ with }
\dim(E^{\mathbf{\rho}_{i}}(j_i)) = 1 \quad \mkern-14mu \text{ for } i\in \{0,1,2\}
\end{equation}
\end{corollary}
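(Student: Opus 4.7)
The plan is to chain together the three ingredients already in place: the definition of $d$-aCM, the description in Example \ref{Twist} of how a twist shifts the filtration indices, and the cohomology criterion of Lemma \ref{Lemma3.3}. By definition, $\mathcal{E}$ is $d$-aCM if and only if $\mathrm{H}^1(\PP^2,\mathcal{E}\otimes\Oo_{\PP^2}(dt)) = 0$ for every $t\in\ZZ$, so I would prove the contrapositive: there exists some $t$ with $\mathrm{H}^1(\PP^2,\mathcal{E}\otimes\Oo_{\PP^2}(dt)) \ne 0$ if and only if a triple $(j_0,j_1,j_2)$ exists satisfying the stated divisibility and dimension conditions.

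To carry this out, I would first choose a convenient $T$-invariant representative of $\Oo_{\PP^2}(dt)$. Since $D_0,D_1,D_2$ all lie in the same divisor class on $\PP^2$, any such representative $D=\ell_0 D_0+\ell_1 D_1+\ell_2 D_2$ with $\ell_0+\ell_1+\ell_2=dt$ works; distinct choices differ by a $T$-invariant principal divisor and hence yield the same underlying vector bundle (Remark \ref{rmk_210}), so the cohomology is unaffected. Taking $D=dt\,D_0$ for simplicity, Example \ref{Twist} says that the filtrations of $\mathcal{G}:=\mathcal{E}\otimes\Oo_{\PP^2}(dt\,D_0)$ satisfy
\[
\dim G^{\rho_0}(j)=\dim E^{\rho_0}(j-dt), \qquad \dim G^{\rho_i}(j)=\dim E^{\rho_i}(j)\ \text{for}\ i=1,2.
\]

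Next, I would apply Lemma \ref{Lemma3.3} to the non-split rank $2$ toric bundle $\mathcal{G}$: its first cohomology is nonzero precisely when there exists $(j_0',j_1',j_2')\in\ZZ^3$ with $j_0'+j_1'+j_2'=0$ and $\dim G^{\rho_i}(j_i')=1$ for every $i$. Substituting $j_0:=j_0'-dt$, $j_1:=j_1'$, $j_2:=j_2'$ rewrites this as: there exist integers $j_0,j_1,j_2$ with $\dim E^{\rho_i}(j_i)=1$ for all $i$ and $j_0+j_1+j_2=-dt$. Quantifying over $t\in\ZZ$, $\mathcal{E}$ fails to be $d$-aCM exactly when such a triple exists for some $t$, i.e.\ when there is a triple $(j_0,j_1,j_2)\in\ZZ^3$ with $j_0+j_1+j_2\equiv 0\pmod d$ and each $\dim E^{\rho_i}(j_i)=1$. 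Negating yields the corollary.

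There is no real obstacle: the argument is a direct bookkeeping of how Example \ref{Twist} shifts the indices appearing in Lemma \ref{Lemma3.3}. The only subtle point is to verify that one may concentrate the twist on a single ray (so that the shift simply translates one of the three indices by $dt$) without losing generality; this is handled by Remark \ref{rmk_210}, after which the equivalence reduces to the change of variables above.
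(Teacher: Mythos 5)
Your proposal is correct and follows exactly the route the paper intends: the paper states Corollary~\ref{cor_3.6} as an ``automatic consequence of Example~\ref{Twist} and Lemma~\ref{Lemma3.3},'' and your argument is precisely the bookkeeping that makes this automatic step explicit (twist by a $T$-invariant representative of $\mathcal{O}_{\PP^2}(dt)$, shift the indices per Example~\ref{Twist}, and apply Lemma~\ref{Lemma3.3} to the twisted bundle). The change of variables turning $j_0'+j_1'+j_2'=0$ into $j_0+j_1+j_2\equiv 0\pmod d$ as $t$ ranges over $\ZZ$ is handled correctly.
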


Note that \( \mathcal{E} \) is $d$-aCM if and only if all of its twists by \( \mathcal{O}_{\mathbb{P}^2}(dt) \) for \( t \in \mathbb{Z} \) is also $d$-aCM. We aim to count the non-split $d$-aCM vector bundles of rank $2$ on $\mathbb{P}^2$ with an $T$-equivariant structure, up to isomorphism and twist by $\mathcal{O}_{\mathbb{P}^2}(dt)$.

\begin{theorem}\label{main}
Let $\mathrm{S}(\PP^2,d;2)$ be the number of non-split $d$-aCM vector bundles of rank $2$  
on $\PP^2$ with $T$-equivariant structure, up to twist by $\mathcal{O}_{\PP^2}(dt)$ for $t \in \mathbb{Z}$.  
Then,
\[
\mathrm{S}(\PP^2,d;2)=\frac{(d-1)d(d+1)(d+2)}{24}.
\]
\end{theorem}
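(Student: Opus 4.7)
The plan is to recast the entire question as a combinatorial count on the shifting indices $\delta(\Ee)$ of Remark~\ref{rmk:shifting-indices}, after first identifying the equivalence that twisting by $\Oo_{\PP^2}(dt)$ induces on $\ZZ^6$.

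By Remark~\ref{rmk:shifting-indices}, the non-split rank-$2$ toric bundles on $\PP^2$ are classified, up to $T$-equivariant isomorphism, by $6$-tuples $(a_0^2,a_0^1;\,a_1^2,a_1^1;\,a_2^2,a_2^1)\in\ZZ^6$ with $a_i^2<a_i^1$. By Example~\ref{Twist} and Remark~\ref{rmk_210}, tensoring with a toric line bundle $\Oo_{\PP^2}(\ell_0 D_0+\ell_1 D_1+\ell_2 D_2)$ shifts each pair $(a_i^2,a_i^1)$ by $\ell_i$, and using $u_{\rho_0}+u_{\rho_1}+u_{\rho_2}=0$ one sees that two toric structures give the same underlying vector bundle up to a twist by $\Oo_{\PP^2}(dt)$ precisely when their shifting indices differ by such a shift with $\ell_0+\ell_1+\ell_2\in d\ZZ$ (principal divisors contribute the relations with $\sum\ell_i=0$, and twists by $\Oo_{\PP^2}(dt)$ contribute those with $\sum\ell_i=dt$). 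Introducing the widths $\alpha_i:=a_i^1-a_i^2\in\ZZ_{>0}$ and the residue $s:=a_0^2+a_1^2+a_2^2\pmod d\in\ZZ/d\ZZ$, the equivalence classes are then in bijection with tuples $(\alpha_0,\alpha_1,\alpha_2;\,s)\in\ZZ_{>0}^{\,3}\times\ZZ/d\ZZ$.

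The $d$-aCM criterion of Corollary~\ref{cor_3.6}, once rewritten via $k_i:=j_i-a_i^2\in\{1,\dots,\alpha_i\}$ and using $j_0+j_1+j_2\equiv s+(k_0+k_1+k_2)\pmod d$, becomes the condition that $-s$ does not lie in the image in $\ZZ/d\ZZ$ of the integer sumset $\{k_0+k_1+k_2:k_i\in\{1,\dots,\alpha_i\}\}$. Crucially, this sumset is the consecutive interval $\{3,4,\dots,\alpha_0+\alpha_1+\alpha_2\}$, so its image in $\ZZ/d\ZZ$ has cardinality $\min(\alpha_0+\alpha_1+\alpha_2-2,\,d)$; hence the number of admissible residues $s$ equals $\max(d+2-\alpha_0-\alpha_1-\alpha_2,\,0)$, nonzero precisely when $\sigma:=\alpha_0+\alpha_1+\alpha_2\le d+1$. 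Since the number of ordered compositions of $\sigma$ into three positive parts is $\binom{\sigma-1}{2}$, this yields
\[
\mathrm{S}(\PP^2,d;2)=\sum_{\sigma=3}^{d+1}\binom{\sigma-1}{2}(d+2-\sigma),
\]
and substituting $t=\sigma-2$ and applying the standard closed forms for $\sum_{t=1}^{d-1}t^k$ with $k=1,2,3$ reduces this sum to $(d-1)d(d+1)(d+2)/24$.

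The main obstacle is the first step: pinning down precisely which additive action on $\ZZ^6$ corresponds to the stated equivalence "up to twist by $\Oo_{\PP^2}(dt)$" at the level of underlying vector bundles. This requires Remark~\ref{rmk_210} to absorb the ambiguity of different equivariant structures on a fixed vector bundle, together with the specific feature of $\PP^2$ that its rays sum to zero, so that principal $T$-invariant divisors are exactly those with $\sum\ell_i=0$. Once the clean invariant $s\in\ZZ/d\ZZ$ emerges alongside the widths $\alpha_i$, the remaining argument is an elementary interval-sumset count followed by a routine power-sum evaluation.
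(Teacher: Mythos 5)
Your argument is correct and arrives at the right formula, but by a genuinely different route from the paper. You package the full equivalence (principal toric twists together with twists by $\Oo_{\PP^2}(dt)$, i.e.\ shifts of the $6$-tuple by $(\ell_0,\ell_1,\ell_2)$ with $\ell_0+\ell_1+\ell_2\equiv 0 \pmod d$) into the complete invariant $(\alpha_0,\alpha_1,\alpha_2;s)\in\ZZ_{>0}^{3}\times\ZZ/d\ZZ$, note that the sumset $\{k_0+k_1+k_2\}$ is the consecutive interval $[3,\sigma]$, and get the closed-form convolution $\sum_{\sigma=3}^{d+1}\binom{\sigma-1}{2}(d+2-\sigma)=\binom{d+2}{4}$, which indeed equals $(d-1)d(d+1)(d+2)/24$ (I checked $d=2,3,4$ give $1,5,15$, matching the paper). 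The paper instead normalizes representatives (arranging $a_i^2\ge -1$, forcing $a_0^2=a_1^2=-1$ and $a_0^1+a_1^1+a_2^1\le d-1$), defines the set $\mathbf{SI}(d)$ of normalized shifting indices, and then derives and solves the recurrence $\mathrm{S}(\PP^2,d;2)=2\,\mathrm{S}(\PP^2,d-1;2)-\mathrm{S}(\PP^2,d-2;2)+\binom{d}{2}$ by a trichotomy relating $\mathbf{SI}(d)$ to $\mathbf{SI}(d-1)$. Your approach is more direct: it avoids both the choice of canonical representatives and the recurrence, and the invariant $s$ makes transparent why a $d$-aCM toric bundle is automatically $d'$-aCM for $d'>d$ (the paper's Remark after the theorem). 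The paper's approach buys the structural trichotomy (types I--III) that it reuses in the subsequent examples. Two points you should make explicit to close the argument fully: (i) surjectivity of the invariant, i.e.\ every $(\alpha_0,\alpha_1,\alpha_2;s)$ is realized by an actual non-split toric bundle --- this follows from Klyachko's classification since any three pairwise distinct lines in $\mathrm{E}$ satisfy condition $\mathrm{(C)}$ on $\PP^2$; and (ii) injectivity, i.e.\ that two $6$-tuples with equal widths and equal residue $s$ really do differ by an admissible shift (take $\ell_i={a_i^2}'-a_i^2$, whose sum is $\equiv 0\pmod d$ and which also matches the upper indices because the widths agree). Both are one-line checks, and with them the proof is complete.
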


\begin{proof}
We begin by imposing some normalizations on the shifting indices of $\mathcal{E}$. As observed in Example~\ref{Twist}, tensoring with a $T$-equivariant line bundle shifts each filtration by the corresponding coefficients in the divisor.  
In particular, tensoring with $\mathcal{O}_{\mathbb{P}^2}(\ell_0 D_0 + \ell_1 D_1 + \ell_2 D_2)$ increases the shifting indices along $\rho_i$ by $\ell_i$.  
Since tensoring with $\mathcal{O}_{\mathbb{P}^2}(dt)$ does not affect whether a vector bundle is $d$-aCM, we may adjust the shifting indices so that
\[
a_i^2 \ge -1 \, \text{for all } i \in \{0,1,2\}.
\]

\noindent Automatically, this implies that $a_i^1\ge 0$ for all $i$. Futhermore, we may minimize the sum \(a_0^2+a_1^2+a_2^2\) as much as possible, subject to the constraint $a_i^2 \ge -1 \,\text{for all } i \in \{0,1,2\}.$ If this sum is greater than or equal to $d-3$, then there exists a triple  \( (b_0, b_1, b_2) \in \mathbb{Z}_{\ge 0}^3 \) such that \( b_0 + b_1 + b_2 = d \)  and \( a_i^2 - b_i \ge -1 \) for all \( i \). Therefore, by tensoring with the line bundle \( \mathcal{O}_{\mathbb{P}^2}(-b_0D_0-b_1D_1-b_2D_2) \), we may replace \( a_i^2 \) with \( a_i^2 - b_i \), resulting in new indices with strictly smaller total sum \( a_0^2 + a_1^2 + a_2^2 \) that still satisfies the constraint \( a_i^2 \ge -1 \). This shows that we can always achieve 
\[ 
a_0^2 + a_1^2 + a_2^2 \le d - 4. 
\]
Next, we turn to the upper indices. Recall that the $1$-dimensional subspaces in each filtration occur at indices \( j_i \) satisfying \( a_i^2 < j_i \le a_i^1 \), by the definition of the shifting indices.  Suppose, for contradiction, that $a_0^1 + a_1^1 + a_2^1 \ge d$. Then, since we have already arranged so that \( a_0^2 + a_1^2 + a_2^2 \le d - 4 \), it follows that
\[
a_0^2 + a_1^2 + a_2^2 +3 < d \le a_0^1 + a_1^1 + a_2^1.
\]
Therefore, there exists a triple \( (j_0, j_1, j_2) \in \mathbb{Z}^3 \) such that  
\[
a_i^2 < j_i \le a_i^1 \quad \text{for all } i, \quad \text{and } j_0 + j_1 + j_2 = d.
\]

\noindent By Corollary~\ref{cor_3.6}, the existence of such a triple implies that $\mathcal{E}$ is not \( d \)-aCM. Hence, we conclude that $a_0^1 + a_1^1 + a_2^1 \le d-1$. Moreover, if all \( a_i^2 = -1 \), there exists a triple \( (j_0, j_1, j_2) = (0,0,0) \) such that \( \dim(E^{\rho_i}(j_i)) = 1 \) for all $i\in \{0,1,2\}$, which also violates the \( d \)-aCM condition by Corollary \ref{cor_3.6}. We define $\overline{\mathbf{SI}}(d)$ to be the set of all shifting indices $
(a_0^2, a_0^1\,;\, a_1^2, a_1^1\,;\, a_2^2, a_2^1) \in \mathbb{Z}^6
$
satisfying the following:
\begin{itemize}
    \item[$\bullet$]  \( a_i^2 \ge -1 \) (and so \( a_i^1 \ge 0 \)) for all \( i \in \{0, 1, 2\} \);
    \item[$\bullet$] \( a_0^1 + a_1^1 + a_2^1 \le d - 1 \) (and so \(a_0^2 + a_1^2 + a_2^2 \le d - 4\)) ;
    \item[$\bullet$] there exist \( i \in \{0,1,2\} \) such that \( a_i^2 > -1 \).
\end{itemize}
It is clear that for each $\delta$ $\in \overline{\mathbf{SI}}(d)$ the corresponding toric vector bundle is $d$-aCM. Finally, suppose that we tensor \( \mathcal{E} \) with the principal line bundle \( \mathcal{O}_{\mathbb{P}^2}(c_0 D_0 + c_1 D_1 + c_2 D_2)\) where \( c_0 + c_1 + c_2 = 0 \) and \( (c_0, c_1, c_2) \ne (0,0,0) \). Then the resulting bundle is isomorphic to \( \mathcal{E} \) as a vector bundle, but its shifting indices are altered by \( c_i\) along each ray \( \rho_i\). To avoid redundancy from such shifts, we now define an equivalence relation on shifting indices. For two shifting indices $\delta = (a_0^2, a_0^1\,;\, a_1^2, a_1^1\,;\, a_2^2, a_2^1)$ and $\delta' = ({a_0^2}', {a_0^1}'\,;\, {a_1^2}', {a_1^1}'\,;\, {a_2^2}', {a_2^1}')$ in $\overline{\mathbf{SI}}(d)$, we set $\delta \sim \delta'$ if there exists $(c_0,c_1,c_2) \in \mathbb{Z}^3 $ such that $c_0+c_1+c_2 = 0$ and 
\[ 
a_j^i + c_j = {a_j^i}' 
\]
for all \( j \in \{0,1,2\} \) and \( i \in \{1,2\} \).
The relation $ \sim $ is an equivalence relation on $\overline{\mathbf{SI}}(d)$. Define 
\[
\mathbf{SI}(d) := 
\left\{\, \delta \in \overline{\mathbf{SI}}(d) \;\middle|\;
\begin{array}{l}
\text{$\delta$ maximizes $a_2^2$ in its equivalence class $[\delta]$}
\end{array}
\,\right\}.
\]
Each element of $\mathbf{SI}(d)$ then corresponds bijectively to a non-split $d$-aCM toric vector bundle of rank $2$ on $\mathbb{P}^2$, up to twist by $\mathcal{O}_{\mathbb{P}^2}(dt)$. Note that by maximizing $a_2^2$ within each equivalence class, the remaining lower indices $a_0^2$ and $a_1^2$ are necessarily minimized, and hence must equal $-1$.
In particular, we have 
\[
\mathrm{S}(\PP^2,d;2)=|\mathbf{SI}(d)|.
\]
We present the following representative examples:
\begin{align*}
\overline{\mathbf{SI}}(2) &= \left\{\, 
(-1, 0;\, -1, 0;\, 0, 1), \quad
(-1, 0;\, 0, 1;\, -1, 0), \quad
(0, 1;\, -1, 0;\, -1, 0)
\,\right\}, \\
\mathbf{SI}(2) &= \left\{\, (-1, 0;\, -1, 0;\, 0, 1) \,\right\}, \\
\mathbf{SI}(3) &= \left\{\, 
\begin{aligned}
&(-1, 0;\, -1, 0;\, 0, 1), \quad
(-1, 0;\, -1, 0;\, 0, 2), \quad
(-1, 0;\, -1, 0;\, 1, 2),\\
&(-1, 0;\, -1, 1;\, 0, 1), \quad
(-1, 1;\, -1, 0;\, 0, 1)
\end{aligned}
\,\right\},
\end{align*}
and so we get $\mathrm{S}(\PP^2,2;2)=1$ and $\mathrm{S}(\PP^2,3;2)=5$. Some of these shifting indices are illustrated in the tables below.

\renewcommand{\arraystretch}{1.34}
\renewcommand{\arraystretch}{1.5}
\begin{table}[h]
\centering
\begin{tabular}{
    >{\centering\arraybackslash}m{1.4cm} |
    >{\centering\arraybackslash}m{1.6cm} |
    >{\centering\arraybackslash}m{1.6cm} |
    >{\centering\arraybackslash}m{1.6cm} |
    >{\centering\arraybackslash}m{1.6cm} |
    >{\centering\arraybackslash}m{1.6cm} |
    >{\centering\arraybackslash}m{1.6cm} |
    >{\centering\arraybackslash}m{1.6cm} |
}
\hline
 & $\ldots$ & $-1$ & $0$ & $1$ & $2$ & $3$ & $\ldots$ \\
\hline
$\rho_0$ &
$\ldots$ &
\cellcolor{lightred}$\mathrm{E}$ &
\cellcolor{lightblue}$W_0^1$ &
0 & 0 & 0 &
$\ldots$ \\
\hline
$\rho_1$ &
$\ldots$ &
\cellcolor{lightred}$\mathrm{E}$ &
\cellcolor{lightblue}$W_1^1$ &
0 & 0 & 0 &
$\ldots$ \\
\hline
$\rho_2$ &
$\ldots$ &
\cellcolor{lightred}$\mathrm{E}$ &
\cellcolor{lightred}$\mathrm{E}$ &
\cellcolor{lightblue}$W_2^1$ &
0 & 0 &
$\ldots$ \\
\hline
\end{tabular}
\caption{$\delta(\Ee) = (-1,0~;~-1,0~;~0,1)$}
\label{tab:table1}
\end{table}

\begin{table}[h]
\centering
\begin{tabular}{
    >{\centering\arraybackslash}m{1.4cm} |
    >{\centering\arraybackslash}m{1.6cm} |
    >{\centering\arraybackslash}m{1.6cm} |
    >{\centering\arraybackslash}m{1.6cm} |
    >{\centering\arraybackslash}m{1.6cm} |
    >{\centering\arraybackslash}m{1.6cm} |
    >{\centering\arraybackslash}m{1.6cm} |
    >{\centering\arraybackslash}m{1.6cm} |
}
\hline
 & $\ldots$ & $-1$ & $0$ & $1$ & $2$ & $3$ & $\ldots$ \\
\hline
$\rho_0$ &
$\ldots$ &
\cellcolor{lightred}$\mathrm{E}$ &
\cellcolor{lightblue}$W_0^1$ &
0 & 0 & 0 &
$\ldots$ \\
\hline
$\rho_1$ &
$\ldots$ &
\cellcolor{lightred}$\mathrm{E}$ &
\cellcolor{lightblue}$W_1^1$ &
0 & 0 & 0 &
$\ldots$ \\
\hline
$\rho_2$ &
$\ldots$ &
\cellcolor{lightred}$\mathrm{E}$ &
\cellcolor{lightred}$\mathrm{E}$ &
\cellcolor{lightred}$\mathrm{E}$ &
\cellcolor{lightblue}$W_2^1$ &
0 &
$\ldots$ \\
\hline
\end{tabular}
\caption{$\delta(\Ee) = (-1,0~;~-1,0~;~1,2)$}
\label{tab:table2}
\end{table}

\begin{table}[h]
\centering
\begin{tabular}{
    >{\centering\arraybackslash}m{1.4cm} |
    >{\centering\arraybackslash}m{1.6cm} |
    >{\centering\arraybackslash}m{1.6cm} |
    >{\centering\arraybackslash}m{1.6cm} |
    >{\centering\arraybackslash}m{1.6cm} |
    >{\centering\arraybackslash}m{1.6cm} |
    >{\centering\arraybackslash}m{1.6cm} |
    >{\centering\arraybackslash}m{1.6cm} |
}
\hline
 & $\ldots$ & $-1$ & $0$ & $1$ & $2$ & $3$ & $\ldots$ \\
\hline
$\rho_0$ &
$\ldots$ &
\cellcolor{lightred}$\mathrm{E}$ &
\cellcolor{lightblue}$W_0^1$ &
0 & 0 & 0 &
$\ldots$ \\
\hline
$\rho_1$ &
$\ldots$ &
\cellcolor{lightred}$\mathrm{E}$ &
\cellcolor{lightblue}$W_1^1$ &
0 & 0 & 0 &
$\ldots$ \\
\hline
$\rho_2$ &
$\ldots$ &
\cellcolor{lightred}$\mathrm{E}$ &
\cellcolor{lightred}$\mathrm{E}$ &
\cellcolor{lightblue}$W_2^1$ &
\cellcolor{lightblue}$W_2^1$ &
0 &
$\ldots$ \\
\hline
\end{tabular}
\caption{$\delta(\Ee) = (-1,0~;~-1,0~;~0,2)$}
\label{tab:table3}
\end{table}

Each row in the table is indexed by the ray $\rho_i$, and the numbers at the top of each column indicate the filtration indices. Table~1 shows the filtration corresponding to the case $d = 2$. As \( d \) increases from \( 2 \) to \( 3 \), the filtration patterns fall into three distinct types:
\begin{itemize}
    \item [(i)] the case when $d = 2$, 
    \item[(ii)] adding a cell with $\mathrm{E}$ to Table~1 (see Table~2), or
    \item[(iii)] adding a cell with $1$-dimensional vector space to Table~1 (see Table~3).
\end{itemize}
As $d$ increases by one, the corresponding shifting indices may change in one of two ways: either both $a_2^1$ and $a_2^2$ increase by $1$, or a single $a_i^1$ increases by $1$ for some $i \in \{0,1,2\}$. The former case corresponds to adding a cell containing $\mathrm{E}$ in the table, which is equivalent to tensoring the bundle by the line bundle $\mathcal{O}_{\mathbb{P}^2}(D_2)$.
The latter case corresponds to inserting a $1$-dimensional subspace, yielding three distinct toric bundles depending on the ray. These bundles are not isomorphic to each other. Indeed, two toric bundles that are isomorphic as vector bundles can differ only by a twist by a principal $T$-equivariant line bundle (see Remark~\ref{rmk_210}), which does not affect the number of $1$-dimensional subspaces in each filtration. Thus, the five shifting indices in $\mathbf{SI}(3)$ fall into three categories:
the original case when $d = 2$, its twist by $\mathcal{O}_{\mathbb{P}^2}(1)$, and three additional cases corresponding to ordered triples $(a, b, c) \in \mathbb{Z}_{> 0}^3$ satisfying $a + b + c = 4$,  
where each entry denotes the number of $1$-dimensional cells added to the respective filtration.

This trichotomy observed at \( d = 3 \) holds for general \( d \). To see this, we classify each element in  \( \delta \in \mathrm{SI}(d) \) into three types. First, if $a_0^1+a_1^1+a_2^1 < d-1$, then \( \delta \) satisfies all the defining inequalities of \( {\mathbf{SI}}(d-1) \) as well. Second, if \( a_0^1 + a_1^1 + a_2^1 = d - 1 \) and \( a_2^2 > 0 \), let $\delta' = (a_0^2, a_0^1\,;\, a_1^2, a_1^1\,;\, a_2^2 - 1, a_2^1 - 1)$. Then $\delta'$ belongs to $\mathbf{SI}(d-1) \setminus \mathbf{SI}(d-2)$, since the sum $a_0^1 + a_1^1 + (a_2^1 -1)= d - 2$. Moreover, since $\delta$ and $\delta'$ differ by $1$ along both $a_2^2$ and $a_2^1$, the corresponding toric bundles differ by a twist by $\mathcal{O}_{\mathbb{P}^2}(1)$. Third, if \( a_0^1 + a_1^1 + a_2^1 = d - 1\) and $(a_0^2, a_1^2, a_2^2) = (-1, -1, 0)$, then we seek nonnegative integers $a_0^1, a_1^1 \ge 0$ and $a_2^1 > 0$ such that $a_0^1 + a_1^1 + a_2^1 = d - 1$. The number of such solutions is ${}_3\mathrm{H}_{d-2} = \binom{d}{2}$.

Consequently, the non-split \( d \)-aCM toric bundles of rank \( 2 \) on \( \mathbb{P}^2 \) fall into three types: 
\begin{itemize}
    \item[(I)] those already appearing in \( \mathbf{SI}(d-1) \); 
    \item[(II)] those obtained from $\mathbf{SI}(d-1) \setminus \mathbf{SI}(d-2)$ by twisting with \( \mathcal{O}_{\mathbb{P}^2}(1) \),  
    corresponding to the addition of a cell containing \( \mathrm{E} \);
    \item[(III)] those formed by adding a single cell containing a $1$-dimensional subspace to one of the filtrations,  
    counted by \( {}_3\mathrm{H}_{d-2}  = \binom{d}{2} \).
\end{itemize}

\noindent This observation gives rise to the following recurrence relation.
\[
\begin{cases}
\hspace{.2cm}\mathrm{S}(\PP^2,2;2)=1,\quad \mathrm{S}(\PP^2,3;2)=5 \vspace{.3cm}\\ 
\hspace{.2cm}\mathrm{S}(\PP^2,d;2) = \mathrm{S}(\PP^2,d-1;2) + \left\{\mathrm{S}(\PP^2,d-1;2)-\mathrm{S}(\PP^2,d-2;2)\right\} + {}_{3}\mathrm{H}_{d-2} \quad \text{ for } d\geq4 \vspace{.1cm}
\end{cases}
\]
\noindent Solving this recurrence relation yields the following
\[
\mathrm{S}(\PP^2,d;2) = \sum_{i=1}^{d-1} \sum_{j=1}^{i} \frac{j(j+1)}{2} =  \frac{(d-1)d(d+1)(d+2)}{24}.\qedhere
\] 
\end{proof}

\begin{remark}
In the proof of Theorem \ref{main}, one can observe that a $d$-aCM toric bundle over $\PP^2$ is also $d'$-aCM for all $d'>d$. This phenomenon is generally uncommon.
\end{remark}

According to \cite[Theorem 8.2]{Per03}, for the toric bundles $\mathcal{E}$ of rank $2$ over the $\mathbb{P}^2 $, the following short exact sequence holds:

\begin{equation}
0 \to \mathcal{O}_{\PP^2}(a_{0}^{2}D_{0}+a_{1}^{2}D_{1}+a_{2}^{2}D_{2}) \stackrel{}{\to} \bigoplus_{(i,j,k) \in A}^{} \mathcal{O}_{\PP^2}(a_{i}^{2}D_{i}+a_{j}^{2}D_{j}+a_{k}^{1}D_{k}) \to \mathcal{E} \to 0,
\label{eq:seq}
\end{equation}

\noindent where each $a_{i}^{n}$ are the indices of filtrations and $A=\{(0,1,2),(1,2,0),(2,0,1)\}$.
To further investigate the bundles, one can analyze using \eqref{eq:seq}.

\begin{example}
For $d = 2$, the only possible class is a twist of the tangent sheaf, $\mathcal{T}_{\mathbb{P}^2}(-2)$, as shown in Example \ref{Twist} and Example \ref{Tangentsheaf}. In this case, the sequence obtained from short exact sequence $\eqref{eq:seq}$ is  
\[
0 \longrightarrow \mathcal{O}_{\mathbb{P}^2}(-D_0-D_1) \longrightarrow \mathcal{O}_{{\mathbb{P}^2}}(-D_0-D_1+D_2){\oplus}{\mathcal{O}_{\mathbb{P}^2}(-D_0)}{\oplus}{\mathcal{O}_{\mathbb{P}^2}(-D_1)} \longrightarrow \mathcal{E} \longrightarrow 0,
\]
which coincides with the Euler sequence tensored by \( \mathcal{O}_{\mathbb{P}^2}(-2) \).

\end{example}

\begin{example}\label{Err}
For the case $d=3$, we have $\mathrm{S}(\PP^2, 3; 2)=5$. Two of the five cases are 
$\mathcal{T}_{\mathbb{P}^2}(-2)$ and $\mathcal{T}_{\mathbb{P}^2}(-1)$, 
corresponding to Table~\ref{tab:table1} and Table~\ref{tab:table3}, respectively.
The other three cases are symmetric and so we assume that $\delta(\Ee) = \left(-1, 0~;~ -1, 0~;~ 0, 2 \right)$ to have the following exact sequence:

\[
0 \to \mathcal{O}_{\PP^2}(-D_{0}-D_{1}) \stackrel{}{\to} \mathcal{O}_{\PP^2}(-D_{0}-D_{1}+2D_{2})\oplus \mathcal{O}_{\PP^2}(-D_{0}) \oplus \mathcal{O}_{\PP^2}(-D_{1}) \to\mathcal{E} \to 0.
\]

\noindent By adding a short exact sequence to the given sequence, we get the following diagram:

\begin{center}
\begin{tikzcd}[
 column sep=small, row sep=normal,
  ar symbol/.style = {draw=none,"\textstyle#1" description,sloped},
  isomorphic/.style = {ar symbol={\cong}}
  ]
& & 0 \ar[d]& 0 \ar{d} &  \\
& & \mathcal{O}_{\PP^2}(-D_{0}-D_{1}+2D_{2}) \ar[r, isomorphic] \ar[d] & \mathcal{O}_{\PP^2}(-D_{0}-D_{1}+2D_{2})  \ar[d] &  \\
0 \ar[r] & \mathcal{O}_{\PP^2}(-D_{0}-D_{1}) \ar[r] \ar[d,isomorphic] & \left(
\begin{array}{c}
\mathcal{O}_{\PP^2}(-D_{0}-D_{1}+2D_{2}) \\
\oplus \\
\mathcal{O}_{\PP^2}(-D_{0}) \\
\oplus \\
\mathcal{O}_{\PP^2}(-D_{1})
\end{array}\right) \ar[r] \ar[d] & \mathcal{E} \ar[r] \ar[d] & 0 \\
0 \ar[r] & \mathcal{O}_{\PP^2}(-D_{0}-D_{1}) \ar[r] & \left(
\begin{array}{c}
\mathcal{O}_{\PP^2}(-D_{0}) \\
\oplus \\
\mathcal{O}_{\PP^2}(-D_{1})
\end{array}\right) \ar[r] \ar[d] & \mathcal{I}_{p_2, \PP^2} \ar[r] \ar[d] & 0 \\
& & 0 & 0. 
\end{tikzcd}
\end{center}

\noindent Recall that the point \( p_2 \) in the diagram above represents the intersection of \( D_0 \) and \( D_1 \). Since 
\[
\dim \left( \mathrm{Ext}_{}^1 (\mathcal{I}_{p_2, \mathbb{P}^2}, \mathcal{O}_{\mathbb{P}^2}) \right) = 1,
\]
the bundle \( \mathcal{E} \) is uniquely determined by the point \( p_2 \). There are two other possible configurations involving four one-dimensional cells. In these alternative cases, we still obtain a similar sequence, but the point \( p_2 \) is replaced by $p_0$ and $p_1$, respectively. Recall that the moduli space $\mathbf{M}_{\PP^2}(0,1)$ of slope-semistable vector bundles of rank two with the Chern classes $(c_1, c_2)=(0,1)$ isomorphic to $\PP^2$ and each slope-semistable sheaf is a nontrivial extension of $\Ii_{p, \PP^2}$ by $\Oo_{\PP^2}$; see \cite[Theorem 8.1]{Angelini}. 
\end{example}

\begin{example}
In the case of $d=4$ there are $15$ classes that satisfy the conditions. However, nine of them either belong to cases with
$d<4$ or are their twists. More precisely, the newly added bundles are those with a $1$-dimensional vector space contained in five cells, and there are six such cases. Furthermore, these six cases can be categorized into two groups based on how the 1-dimensional vector spaces are distributed among the three filtrations. In one group, two of the filtrations contain two 1-dimensional cells each, while the remaining filtration contains one. In the other group, one filtration contains three 1-dimensional cells, while the other two contain one each. Consider the bundle $\mathcal{E}$, corresponding to the tuple  \{(-1,0),(-1,1),(0,2)\} as an example from the first group. By constructing the sequence in the same way as in Example \ref{Err}, we obtain the following.

\begin{tikzcd}[
 column sep=small, row sep=normal,
  ar symbol/.style = {draw=none,"\textstyle#1" description,sloped},
  isomorphic/.style = {ar symbol={\cong}}
  ]
& & 0 \ar[d]& 0 \ar{d} &  \\
& & \mathcal{O}_{\PP^2}(-D_{1}) \ar[r, isomorphic] \ar[d] & \mathcal{O}_{\PP^2}(-D_{1})  \ar[d] &  \\
0 \ar[r] & \mathcal{O}_{\PP^2}(-D_{0}-D_{1}) \ar[r] \ar[d,isomorphic] & \left(
\begin{array}{c}
\mathcal{O}_{\PP^2}(-D_{0}-D_{1}+2D_{2}) \\
\oplus \\
\mathcal{O}_{\PP^2}(-D_{0}+D_{1}) \\
\oplus \\
\mathcal{O}_{\PP^2}(-D_{1})
\end{array}\right) \ar[r] \ar[d] & \mathcal{E} \ar[r] \ar[d] & 0 \\
0 \ar[r] & \mathcal{O}_{\PP^2}(-D_{0}-D_{1}) \ar[r] & \left(
\begin{array}{c}
\mathcal{O}_{\PP^2}(-D_{0}-D_{1}+2D_{2}) \\
\oplus \\
\mathcal{O}_{\PP^2}(-D_{0}+D_{1})
\end{array}\right) \ar[r] \ar[d] & {\mathcal{I}_{Z_1, \PP^2} \tensor {\mathcal{O}_{\PP^2}(-D_{0}+D_{1}+2D_{2})}}  \ar[r] \ar[d] & 0 \\
& & 0 & 0 .
\end{tikzcd}

\noindent Here, the zero-dimensional subscheme $Z_1$ is given as the intersection $2D_1 \cap 2D_2$, i.e. $Z_1=4p_0$. Noticing that $\Ee$ is a stable vector bundle of rank $2$ with the Chern classes $(c_1, c_2)=(1,2)$, we get that the corresponding set of jumping lines of the second kind
\[
C(\Ee)=\left\{\ell \in (\PP^2)^*~\big|~ \mathrm{h}^0\left(\Ee(-1)_{|\ell^2}\right)>0\right\}
\]
is a conic with corank one; see \cite{Hulek}. By tensoring the last vertical sequence in the diagram above with $\Oo_{\ell^2}(-1)$ with $\ell=D_1$ or $D_2$, one gets
\[
0\to \Oo_{\ell^2} \to \Ee(-1)_{|\ell^2}\to \Oo_{\ell^2}(-1) \to 0.
\]
It implies that the dual points $[D_0], [D_1] \in C(\Ee)\subset (\PP^2)^*$. Since $\Ee$ is a toric bundle, the locus $C(\Ee)$ is a $T$-invariant divisor of $(\PP^2)^*$ and so we get $C(\Ee)=[p_1] \cup [p_2]$, where $[p_i]\subset (\PP^2)^*$ is the dual line corresponding to $p_i$. 
\end{example}


\begin{thebibliography}{99}


\bibitem{Angelini}
E.~Angelini, \emph{The Torelli problem for logarithmic bundles of hypersurface arrangements in the projective space}, Ph.D Thesis, Univ. Firenze, (2013).

\bibitem{CMR16} 
L.~Costa and R.~M.~Miró-Roig, \emph{Homogeneous ACM bundles on a Grassmannian}, Adv. Math. \textbf{289}, (2016), 95--113.

\bibitem{CLS11}
D.~A.~Cox, J.~B.~Little and H.~K.~Schenck, \emph{Toric Varieties}, American Mathematical Society, (2011).

\bibitem{DDK20}
J.~Dasgupta, A.~Dey, and B.~Khan, \emph{Stability of equivariant vector bundles over toric varieties}, Doc. Math., \textbf{25}, (2020), 1787--1833.

\bibitem {DJ21} 
D.~Faenzi and J.~Pons-Llopis, \emph{The Cohen-Macaulay representation type of arithmetically Cohen-Macaulay varieties}, Épijournal de Géométrie Algébrique, \textbf{5} (2021).

\bibitem{Fae15}
D.~Faenzi, \emph{Yet again on two examples by Iyama and Yoshino}, Bull. Lond. Math. Soc., \textbf{47}, no.~5,  (2015), 809--817.

\bibitem{Ful93}
W.~Fulton, \emph{Introduction to Toric Varieties}, Princeton University Press, (1993).

\bibitem{Hulek}
K.~Hulek, \emph{Stable rank-$2$ vector bundles on $\mathbb{P}_2$ with odd $c_1$}, Math. Ann. \textbf{242} (1979), 241--266.

\bibitem{KD19}
B.~Khan and J.~Dasgupta, \emph{Toric vector bundles on Bott tower}, Bull. Sci. Math., \textbf{155}, (2019), 74--91.

\bibitem{Kly90}
A.~A.~Klyachko, \emph{Equivariant bundles on toral varieties}, Math. USSR-Izv.,  \textbf{35}, no.~2, (1990), 337--375.

\bibitem{Kly91}
A.~A.~Klyachko, \emph{Vector bundles and torsion free sheaves on the projective plane}, Max-Planck-Institut für Mathematik, unpublished manuscript, (1991).

\bibitem{Kool11}
M.~Kool, \emph{Fixed point loci of moduli spaces of sheaves on toric varieties}, Adv. Math., \textbf{227}, no.~4, (2011), 1700--1755.

\bibitem{Per03}
M.~Perling, \emph{Resolutions and moduli for equivariant sheaves on toric varieties}, Ph.D. Dissertation, Univ. Kaiserslautern, (2003).



\end{thebibliography}
\end{document}